\documentclass[a4paper,11pt]{article}

\usepackage[a4paper, left=3.5cm, right=3.5cm, top=2.8cm, bottom=3.5cm, footskip=1.2cm]{geometry}

\usepackage{amsmath, amssymb, amsthm} 
\usepackage{mathtools}

\usepackage{bm}

\usepackage[inline]{enumitem}
\usepackage{multicol}

\newcommand{\argdot}{\mspace{1.5mu} \cdot \mspace{1.5mu}}

\newcommand\dual{^{\prime}}

\DeclareMathOperator{\AC}{AC}

\DeclareMathOperator{\col}{col}
\DeclareMathOperator{\row}{row}

\DeclareMathOperator{\Var}{Var}

\DeclarePairedDelimiter{\abs}{\lvert}{\rvert} 
\DeclarePairedDelimiter{\norm}{\lVert}{\rVert}

\DeclarePairedDelimiter{\rbra}{\lparen}{\rparen}
\DeclarePairedDelimiter{\cbra}{\lbrace}{\rbrace}
\DeclarePairedDelimiter{\sbra}{\lbrack}{\rbrack}

\DeclarePairedDelimiterX{\Set}[2]{\lbrace}{\rbrace}{\mspace{1mu} #1 : #2 \mspace{1mu}}

\newenvironment{ipmatrix}{\begin{pmatrix}}{\end{pmatrix}}
\newenvironment{irowvec}{\begin{ipmatrix}}{\end{ipmatrix}}

\usepackage[
	unicode=true,
	bookmarks=true,
	bookmarkstype=toc,
	bookmarksnumbered=true,
	colorlinks=true,
	]{hyperref}

\theoremstyle{definition}
\newtheorem{definition}{Definition}[section]
\newtheorem{notation}[definition]{Notation}

\theoremstyle{plain}
\newtheorem{theorem}[definition]{Theorem}
\newtheorem{proposition}[definition]{Proposition}
\newtheorem{corollary}[definition]{Corollary}

\theoremstyle{remark}
\newtheorem{remark}[definition]{Remark}

\numberwithin{equation}{section}

\begin{document}

\renewcommand{\thefootnote}{\fnsymbol{footnote}}

\title{Structural Origin of the Hale Bilinear Form: From the Viewpoint of the Green and Lagrange Identities}
\author{Junya Nishiguchi\thanks{Institute for Fundamental Sciences, Faculty of Science and Engineering, Setsunan University, 17-8, Ikeda-Nakamachi, Neyagawa, Osaka, 572-8508, Japan / Mathematical Science Group, Advanced Institute for Materials Research (AIMR), Tohoku University, Katahira 2-1-1, Aoba-ku, Sendai, 980-8577, Japan}
\footnote{E-mail: junya.nishiguchi@setsunan.ac.jp / junya.nishiguchi.2@gmail.com, ORCID: \url{https://orcid.org/0000-0002-0326-2845}}}
\date{}
\maketitle

\begin{abstract}
In the study of linear retarded functional differential equations (RFDEs), the classical bilinear form proposed by J. Hale has long served as an indispensable tool for the geometric theory.
Nevertheless, its original definition suffers from ambiguities, such as the non-commutativity of matrix products and the precise meaning of the involved double integrals, due to its definition motivated primarily by eigenvalue problems.
Here, by establishing the Green and Lagrange identities associated with a given autonomous linear RFDE, we derive a bilinear form $B$, which is none other than the celebrated Hale bilinear form.
This not only resolves the aforementioned issues in the original formulation, but also clarifies its structural origin within the theory of differential equations.
Furthermore, based on the recent definition of the $M^p$-space via the memory measure, we demonstrate that the bilinear form $B$ uniquely extends to the product space $M^{*q} \times M^p$ by dense extension, and show that the Green and Lagrange identities still hold in the sense of mild solutions.
In terms of its potential for extension to non-autonomous systems, neutral functional differential equations, and differential equations with delay and spatial structures, this work opens a new avenue for the adjoint theory of linear functional differential equations.
\bigskip

\noindent
\textbf{2020 Mathematics Subject Classification.} Primary 34K06, 34K05.
\bigskip

\noindent
\textbf{Keywords.} Retarded functional differential equations; Hale bilinear form; Green and Lagrange identities; adjoint theory; dense extension.
\end{abstract}

\section{Introduction}

In the theory of linear retarded functional differential equations (RFDEs), a bilinear form established by J. Hale~\cite{Hale_1971_book}, now widely referred to as the \textit{Hale bilinear form} (see \cite{Frasson--VerduynLunel_2003}), serves as a fundamental tool to capture the dynamics of RFDEs.
Specifically, for an autonomous linear RFDE, the corresponding Hale bilinear form is utilized to compute the projection onto the generalized eigenspaces of the infinitesimal generator of the $C_0$-semigroup defined by the initial value problem.
The Hale bilinear form has become an indispensable tool in invariant manifold theory for nonlinear RFDEs as well as in normal form theory for systems involving parameters (see, e.g., \cite{Chow--Mallet-Paret_1977, Faria_Magalhaes_1995_BT, Faria_Magalhaes_1995_Hopf, Guo--Wu_2013}).

As described above, the Hale bilinear form has played a crucial role ranging from the development of theory of linear RFDEs to its applications to nonlinear problems.
Nevertheless, its definition suffers from several ambiguities, such as the non-commutativity of matrix products and the precise meaning of the double integrals involved.
These issues date back to the very introduction of the bilinear form and have remained unresolved even in the standard literature (see \cite{Hale_1977, Hale--VerduynLunel_1993}).
Furthermore, according to Hale, one of the motivations for introducing the bilinear form lies in the eigenvalue problem of an autonomous linear RFDE.
However, this specific motivation led to the introduction of the bilinear form from a purely computational perspective.

In this paper, we show that, for a given autonomous linear RFDE
\begin{equation}\label{eq:RFDE_intro}
	\dot{x}(t) = Lx_t \quad (t \ge 0),
\end{equation}
a bilinear form $B(\psi, \phi)$ naturally arises from the perspective of the \textit{Green identity}, under the duality induced by row and column vectors.
See Section~\ref{sec:Green_identity_bilinear_form} for the definition of the bilinear form $B$, which is none other than the celebrated Hale bilinear form.
Consequently, our framework not only provides a non-\textit{ad-hoc} definition of the Hale bilinear form but also elucidates its structural origin.
Furthermore, it resolves the aforementioned ambiguities present in Hale's formulation.
The Green identity, expressed in terms of the bilinear form $B$, leads to the \textit{Lagrange identity} as a local differential relation.
These considerations naturally yield the adjoint equation
\begin{equation}\label{eq:adjoint_RFDE_intro}
	\dot{y}(t) = -L^*y^t \quad (t \le 0)
\end{equation}
of the RFDE~\eqref{eq:RFDE_intro} (see Section~\ref{sec:adjoint_eq_semigroup} for details).
That is, our approach derives both the Hale bilinear form and the adjoint equation, both of which have traditionally been introduced somewhat heuristically in the existing literature.

For the RFDE~\eqref{eq:RFDE_intro} and its adjoint~\eqref{eq:adjoint_RFDE_intro}, the Green identity naturally implies that the $C_0$-semigroups generated by their respective initial value problems are ``adjoint'' to each other with respect to the bilinear form $B$.
Combining this with the boundedness of the bilinear form $B$, one can see that the infinitesimal generators of these $C_0$-semigroups are also ``adjoint'' to each other with respect to the bilinear form $B$.
This stands in sharp contrast to the classical approach of Hale~\cite{Hale_1971_book}, who initially defined the adjoint operator for the infinitesimal generator of the original equation with respect to his \textit{ad hoc} bilinear form, and subsequently attempted to derive the adjoint equation from it.
For Hale and his collaborators, the adjoint operator and equation obtained in this manner were merely ``formal adjoints'' to the ``true adjoints'' in the sense of functional analysis.
In contrast, our approach based on the Green identity reveals that the bilinear form $B$ itself, as derived from the identity, is the truly essential structure.

Given that the bilinear form $B$, which coincides with the Hale bilinear form, is the truly essential structure in the adjoint theory of the RFDE~\eqref{eq:RFDE_intro}, a natural question arises as to how the notion of mild solutions interacts with its adjoint equation~\eqref{eq:adjoint_RFDE_intro}.
This notion of mild solutions was introduced by Nishiguchi~\cite{Nishiguchi_2023} to provide a framework that allows ``integrable functions" to be selected as initial history functions.

Strictly speaking, the term ``integrable functions'' is somewhat imprecise from the viewpoint of Lebesgue integration theory.
This is because the initial value problem requires that the present value $\phi(0)$ of the initial history function $\phi$ be well-defined, a requirement that cannot be captured by the ``almost everywhere" equivalence relation inherent in the Lebesgue measure.
To accommodate such functions, a space was initially introduced as the $M^p$-space by Delfour and Mitter~\cite{Delfour--Mitter_1972_hereditary}.
However, it remained somewhat unnatural from the perspective of measure theory.
With this motivation, in subsequent papers by Delfour and his collaborators, the product space of an $L^p$-space and a Euclidean space came to be utilized instead.
A direct approach to resolving this unnaturalness was provided by the present author in a recent paper~\cite{Nishiguchi_2026} following \cite{Nishiguchi_2023}.
In that work, the $M^p$-space is redefined as the $L^p$-space with respect to the \textit{memory measure} $m$, which is defined as the sum of the Lebesgue measure on $[-r, 0]$ (where $r > 0$ represents the retardation) and the Dirac measure concentrated at $0$.
We note that similar measures have been considered in different contexts (see, e.g.,  \cite{Coleman--Mizel_1966, Chekroun--Ghil--Liu--Wang_2016}).

The above definition of the $M^p$-space using the memory measure $m$ offers a new perspective on the study of mild solutions to the RFDE~\eqref{eq:RFDE_intro}.
Indeed, we can show that mild solutions to \eqref{eq:RFDE_intro} with initial histories in the $M^p$-space are obtained as a dense extension of those with continuous initial histories.
Here, it is essential that the mild solutions are defined by using the \textit{retarded integral}
\begin{equation*}
	\rbra*{\int_0^t x_s \mspace{2mu} ds}(\theta)
	\coloneqq \int_0^t x(s + \theta) \mspace{2mu} ds
	\quad
	(\theta \in [-r, 0]).
\end{equation*}
See Subsection~\ref{subsec:review_mild_sols} for details of mild solutions to \eqref{eq:RFDE_intro}.
The aforementioned dense extension not only provides an alternative proof for the existence and uniqueness of solutions to the initial value problem associated with the mild solutions to \eqref{eq:RFDE_intro}, but also elucidates the naturalness of considering mild solutions in the study of autonomous linear RFDEs.

By applying the framework of dense extension to the bilinear form $B$, we can show that $B$ uniquely extends to a bounded bilinear form $\tilde{B}$ on the product Banach space
\begin{equation*}
	M^{*q} \times M^p.
\end{equation*}
Here the space $M^{*q}$ emerges as the ``adjoint'' to $M^p$ (see Subsection~\ref{subsec:bounded_extension_B} for details).
This extension naturally induces the notion of mild solutions to the adjoint equation~\eqref{eq:adjoint_RFDE_intro}.
Furthermore, we derive that the $C_0$-semigroup defined by the mild solutions to \eqref{eq:RFDE_intro} and the $C_0$-semigroup defined by those to \eqref{eq:adjoint_RFDE_intro} are ``adjoint'' to each other with respect to the extended bilinear form $\tilde{B}$.
We note that the introduction of a bilinear form on the product space
\begin{equation*}
	\rbra*{L^2([-r, 0], \mathbb{C}^n) \times \mathbb{C}^n} \times \rbra*{L^2([-r, 0], \mathbb{C}^n) \times \mathbb{C}^n}
\end{equation*}
and the elucidation of its connection to the Hale bilinear form were previously addressed by Delfour and Manitius~\cite{Delfour--Manitius_1980}.
Although the adjoint theory for the RFDE~\eqref{eq:RFDE_intro} in the product Hilbert space
\begin{equation*}
	L^2([-r, 0], \mathbb{C}^n) \times \mathbb{C}^n
\end{equation*}
was extensively studied by Delfour, Manitius, and their collaborators (see, e.g., \cite{Bernier--Manitius_1978, Vinter_1978, Delfour_1980, Delfour--Manitius_1980, Manitius_1980, Salamon_1985}), it should be emphasized that neither the formulation of the extended bilinear form $\tilde{B}$ based on the dense extension nor the construction of the adjoint theory using $\tilde{B}$ has been developed in the literature.
For relatively recent studies utilizing the product Hilbert space $L^2([-r, 0], \mathbb{C}^n) \times \mathbb{C}^n$, see \cite{Banks--Robbins--Sutton_2013, Breda--VanVleck_2014, Chekroun--Ghil--Liu--Wang_2016, Chekroun--Liu_2024} for example.

The fact that the $C_0$-semigroup defined by the mild solutions to \eqref{eq:RFDE_intro} and the $C_0$-semigroup defined by those to \eqref{eq:adjoint_RFDE_intro} are ``adjoint'' to each other with respect to the extended bilinear form $\tilde{B}$ strongly suggests that the Green identity and the Lagrange identity also hold in the mild sense.
In this paper, we extend the Lagrange identity to the mild sense and subsequently derive the Green identity from it.
Crucially, the framework of dense extension plays an essential role in this process as well.
\smallskip

This paper is organized as follows.
In Section~\ref{sec:Green_identity_bilinear_form}, we derive the Green identity and the Lagrange identity for the RFDE~\eqref{eq:RFDE_intro}, through which the bilinear form $B$ naturally emerges.
In Section~\ref{sec:adjoint_eq_semigroup}, based on the structural insights from Section~\ref{sec:Green_identity_bilinear_form}, we obtain the adjointness with respect to the bilinear form $B$ between the $C_0$-semigroups generated by the initial value problems for the RFDE~\eqref{eq:RFDE_intro} and its adjoint~\eqref{eq:adjoint_RFDE_intro}.
Motivated by this fact, in the remaining sections (Sections~\ref{sec:mild_sol_semigroup} and \ref{sec:Lagrange_identity_mild}), we investigate the interaction between the notion of mild solutions and the adjoint theory.
In Section~\ref{sec:mild_sol_semigroup}, we revisit the initial value problem for mild solutions to \eqref{eq:RFDE_intro} and reformulate both the unique existence of a solution and the associated $C_0$-semigroup from the perspective of dense extension.
In Section~\ref{sec:Lagrange_identity_mild}, through the dense extension of the bilinear form $B$ to the bounded bilinear form $\tilde{B}$ on $M^{*q} \times M^p$, we examine the adjoint relation with respect to $\tilde{B}$ between the $C_0$-semigroups determined by the mild solutions to the RFDE~\eqref{eq:RFDE_intro} and its adjoint~\eqref{eq:adjoint_RFDE_intro}.
Concluding the section, motivated by this adjointness, we establish the Lagrange identity in the mild sense for the RFDE~\eqref{eq:RFDE_intro}.

\subsection*{Notation}

Throughout this paper, let $r$ be a positive constant and $n$ be a positive integer.
\begin{itemize}
\item Let $\mathbb{C}^n$ be the set of $n$-dimensional complex column vectors, $(\mathbb{C}^n)\dual$ be the set of $n$-dimensional complex row vectors, and $M_n(\mathbb{C})$ be the set of $n \times n$ complex matrices.
Here $\mathbb{C}^n$ and $(\mathbb{C}^n)\dual$ are considered as Banach spaces endowed with the Euclidean norm $\abs*{\argdot}$, whereas $M_n(\mathbb{C})$ is considered as a Banach space endowed with the operator norm $\norm*{\argdot}$.
We denote a row vector $\begin{irowvec} b_1 & \cdots & b_n \end{irowvec} \in (\mathbb{C}^n)\dual$ by $\row(b_1, \dots, b_n)$.
Similarly, a column vector $(a_1, \dots, a_n) \in \mathbb{C}^n$ is also denoted by $\col(a_1, \dots, a_n)$.
\item For a closed and bounded interval $I$ and a Banach space, let $C(I, X)$ be the Banach space of all continuous functions from $I$ to $X$, equipped with the supremum norm $\norm{\argdot}_{C(I)}$.
\item For a closed and bounded interval $I$, let $\AC(I, \mathbb{C}^n)$ be the Banach space of all absolutely continuous functions from $I$ to $\mathbb{C}^n$, equipped with the norm given by
\begin{equation*}
	\norm*{x}_{\AC(I)}
	\coloneqq \norm*{x}_{C(I)} + \norm*{\dot{x}}_{L^1(I)}
\end{equation*}
for $x \in \AC(I, \mathbb{C}^n)$.
Here $\dot{x}$ denotes the derivative of $x \in \AC(I, \mathbb{C}^n)$, which exists almost everywhere on $I$.
\end{itemize}

\section{Green Identity and Bilinear Form}\label{sec:Green_identity_bilinear_form}

Throughout this paper, let $L \colon C([-r, 0], \mathbb{C}^n) \to \mathbb{C}^n$ be a continuous linear map.
We consider the autonomous linear RFDE
\begin{equation}\label{eq:RFDE}
	\dot{x}(t) = Lx_t \quad (t \ge 0)
\end{equation}
for a continuous unknown function $x \colon [-r, \infty) \to \mathbb{C}^n$.
Here $x_t \in C([-r, 0], \mathbb{C}^n)$ is the \textit{history segment} of $x$ at $t \ge 0$ defined by
\begin{equation*}
	x_t(\theta) = x(t + \theta) \quad (\theta \in [-r, 0]).
\end{equation*}

Let $\eta \colon [-r, 0] \to M_n(\mathbb{C})$ be a function of bounded variation such that the operator $L$ is represented as
\begin{equation*}
	L\phi = \int_{-r}^0 d\eta(\theta) \mspace{2mu} \phi(\theta)
\end{equation*}
for all $\phi \in C([-r, 0], \mathbb{C}^n)$.
Then the RFDE~\eqref{eq:RFDE} can be written as
\begin{equation*}
	\dot{x}(t) = \int_{-r}^0 d\eta(\theta) \mspace{2mu} x(t + \theta)
	\quad (t \ge 0).
\end{equation*}

\subsection{Duality and Compact Support}

The natural duality between row and column vectors leads to the following proposition, which concerns row-vector-valued and column-vector-valued continuous functions with compact support.

\begin{proposition}\label{prop:duality_compact_support}
For any continuous functions $u \colon \mathbb{R} \to \mathbb{C}^n$ and $v \colon \mathbb{R} \to (\mathbb{C}^n)\dual$ with compact support,
\begin{equation}\label{eq:duality_compact_support}
	\int_{-\infty}^\infty v(t) \rbra*{\int_{-r}^0 d\eta(\theta) \mspace{2mu} u(t + \theta)} \mspace{2mu} dt
	= \int_{-\infty}^\infty \rbra*{\int_{-r}^0 v(t - \theta) \mspace{2mu} d\eta(\theta)} u(t) \mspace{2mu} dt
\end{equation}
holds.
\end{proposition}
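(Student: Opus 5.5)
The plan is to reduce the identity to an interchange of the order of integration (Fubini) for the product of Lebesgue measure $dt$ and the matrix-valued Lebesgue--Stieltjes measure $d\eta$, followed by the translation $t \mapsto t - \theta$ in the inner Lebesgue integral. To make Fubini available, I first decompose $\eta$ entrywise: each entry $\eta_{ij}$ is a complex function of bounded variation on $[-r, 0]$. Hence it defines a finite complex Borel measure $\mu_{ij}$ on $[-r, 0]$ with $\int_{-r}^0 d\eta_{ij}(\theta)\,f(\theta) = \int f \, d\mu_{ij}$ for continuous $f$. If needed, I normalize $\eta$ to be right-continuous on $(-r,0)$; this does not change the Stieltjes integrals of continuous functions.

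Writing both sides in coordinates, the left-hand side of \eqref{eq:duality_compact_support} is
\begin{equation*}
	\sum_{i,j} \int_{-\infty}^\infty v_i(t) \int_{[-r,0]} u_j(t+\theta) \, d\mu_{ij}(\theta) \, dt .
\end{equation*}
The right-hand side is
\begin{equation*}
	\sum_{i,j} \int_{-\infty}^\infty \int_{[-r,0]} v_i(t-\theta) \, d\mu_{ij}(\theta) \, u_j(t) \, dt .
\end{equation*}
It therefore suffices to prove, for each fixed pair $(i,j)$,
\begin{equation*}
	\int_{-\infty}^\infty \int_{[-r,0]} v_i(t) u_j(t+\theta) \, d\mu_{ij}(\theta)\, dt
	= \int_{-\infty}^\infty \int_{[-r,0]} v_i(t-\theta) u_j(t) \, d\mu_{ij}(\theta)\, dt .
\end{equation*}

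The key steps are as follows. First, the function $(t,\theta) \mapsto v_i(t)u_j(t+\theta)$ is continuous on $\mathbb{R} \times [-r,0]$. It vanishes unless $t$ lies in a fixed compact set $K$ containing the support of $v$. It is therefore bounded and Borel measurable, hence integrable with respect to $dt \otimes \abs{\mu_{ij}}$, since the total variation $\abs{\mu_{ij}}([-r,0])$ is finite. The same holds for $v_i(t-\theta)u_j(t)$. Second, I apply Fubini's theorem for the complex measure $\mu_{ij}$, via its polar decomposition $d\mu_{ij} = h \, d\abs{\mu_{ij}}$, to swap the order on the left: $\int_{[-r,0]} \int_{\mathbb{R}} v_i(t)u_j(t+\theta)\,dt\,d\mu_{ij}(\theta)$. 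Third, for each fixed $\theta$, the translation invariance of Lebesgue measure gives $\int_{\mathbb{R}} v_i(t)u_j(t+\theta)\,dt = \int_{\mathbb{R}} v_i(s-\theta)u_j(s)\,ds$. Fourth, I swap back with Fubini to obtain the right-hand side. Finally, I resum over $i,j$ and reassemble the row/column products. Here the order $v\,(d\eta\, u)$ versus $(v\, d\eta)\, u$ is respected automatically, because entrywise each term is $v_i\,d\eta_{ij}\,u_j$.

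The only genuine obstacle is the justification of Fubini between a Riemann--Stieltjes integral and a Lebesgue integral. This is why I pass to the measures $\mu_{ij}$ and use the identification of Stieltjes integrals of continuous integrands with integrals against $\mu_{ij}$. One must also check that the inner functions $t \mapsto \int_{-r}^0 d\eta(\theta)\,u(t+\theta)$ and $t \mapsto \int_{-r}^0 v(t-\theta)\,d\eta(\theta)$ are continuous with compact support, so that both sides are defined as ordinary integrals. Continuity follows from the uniform continuity of $u$ and $v$ and the bound $\abs{L\phi} \le \mathrm{Var}(\eta)\norm{\phi}$. Compact support holds because the supports of these functions lie within distance $r$ of the supports of $u$ and $v$.
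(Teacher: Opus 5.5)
Your proposal is correct and follows essentially the same route as the paper: reduce entrywise to terms $v_i\,d\eta_{ij}\,u_j$, interchange the order of integration, translate $t \mapsto t-\theta$ in the inner Lebesgue integral, and interchange back. The only difference is cosmetic. The paper truncates to $[-T,T]$, cites Widder's Fubini theorem for Stieltjes integrals, and then has to move $[-T+\theta, T+\theta]$ to a fixed interval $[-T',T']$. You instead work over all of $\mathbb{R}$ with the Lebesgue--Stieltjes measures $\mu_{ij}$, which avoids that interval bookkeeping.
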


\begin{proof}
We note that the uniform continuity of $u$ and $v$ yields the continuity of
\begin{align*}
	\mathbb{R} \ni t &\mapsto \int_{-r}^0 d\eta(\theta) \mspace{2mu} u(t + \theta) \in \mathbb{C}^n, \\
	\mathbb{R} \ni t &\mapsto \int_{-r}^0 v(t - \theta) \mspace{2mu} d\eta(\theta) \in (\mathbb{C}^n)\dual.
\end{align*}
For each $i, j \in \{1, 2, \dots, n\}$, let
\begin{align*}
	I_{ij} &\coloneqq \int_{-\infty}^\infty v_i(t) \rbra*{\int_{-r}^0 d\eta_{ij}(\theta) \mspace{2mu} u_j(t + \theta)} \mspace{2mu} dt, \\
	J_{ij} &\coloneqq \int_{-\infty}^\infty \rbra*{\int_{-r}^0 v_i(t - \theta) \mspace{2mu} d\eta_{ij}(\theta)} u_j(t) \mspace{2mu} dt,
\end{align*}
where $u = \col(u_1, \dots, u_n)$, $v = \row(v_1, \dots, v_n)$, and $\eta = (\eta_{ij})$.
Then \eqref{eq:duality_compact_support} is expressed as
\begin{equation*}
	\sum_{i, j = 1}^n I_{ij} = \sum_{i, j = 1}^n J_{ij}.
\end{equation*}
We now show that $I_{ij} = J_{ij}$ holds for each $i, j \in \{1, 2, \dots, n\}$.
Since $u$ and $v$ have compact support,
\begin{align*}
	I_{ij} &= \int_{-T}^T v_i(t) \rbra*{\int_{-r}^0 d\eta_{ij}(\theta) \mspace{2mu} u_j(t + \theta)} \mspace{2mu} dt, \\
	J_{ij} &= \int_{-T}^T \rbra*{\int_{-r}^0 v_i(t - \theta) \mspace{2mu} d\eta_{ij}(\theta)} u_j(t) \mspace{2mu} dt
\end{align*}
hold for all sufficiently large $T > 0$.
Let a sufficiently large $T > 0$ be given.
By Fubini's theorem for Stieltjes integrals (see \cite[Theorem~15a in Chapter I]{Widder_1941}), we have
\begin{align*}
	I_{ij}
	&= \int_{-r}^0 \rbra*{\int_{-T}^T v_i(t)u_j(t + \theta) \mspace{2mu} dt} \mspace{2mu} d\eta_{ij}(\theta) \\
	&= \int_{-r}^0 \rbra*{\int_{-T + \theta}^{T + \theta} v_i(t - \theta)u_j(t) \mspace{2mu} dt} \mspace{2mu} d\eta_{ij}(\theta),
\end{align*}
where the last term is equal to
\begin{equation*}
	\int_{-r}^0 \rbra*{\int_{-T'}^{T'} v_i(t - \theta)u_j(t) \mspace{2mu} dt} \mspace{2mu} d\eta_{ij}(\theta)
\end{equation*}
for all sufficiently large $T' > 0$ by choosing a sufficiently large $T > 0$ appropriately since $u$ has compact support.
Applying Fubini's theorem for Stieltjes integrals again, we obtain
\begin{equation*}
	I_{ij}
	= \int_{-T'}^{T'} \rbra*{\int_{-r}^0 v_i(t - \theta) \mspace{2mu} d\eta_{ij}(\theta)} u_j(t) \mspace{2mu} dt
	= J_{ij}.
\end{equation*}
Therefore, \eqref{eq:duality_compact_support} is concluded.
\end{proof}

\subsection{The Green and Lagrange Identities}

We note that \eqref{eq:duality_compact_support} holds since the integration is taken over the whole real line.
By restricting the domain of integration to a closed and bounded interval $[\alpha, \beta]$, we now examine what can be extracted from the difference between the left and the right-hand sides of \eqref{eq:duality_compact_support}.

In the rest of this section, let $[\alpha, \beta]$ be a closed and bounded interval of $\mathbb{R}$.
We use the following notation.

\begin{notation}
Let $u \in C([\alpha - r, \beta], \mathbb{C}^n)$ and $v \in C([\alpha, \beta + r], (\mathbb{C}^n)\dual)$ be given.
For each $t \in [\alpha, \beta]$, let $u_t \colon [-r, 0] \to \mathbb{C}^n$ and $v^t \colon [0, r] \to (\mathbb{C}^n)\dual$ be the continuous functions defined as
\begin{align*}
	u_t(\theta) &\coloneqq u(t + \theta) \quad (\theta \in [-r, 0]), \\
	v^t(\tau) &\coloneqq v(t + \tau) \quad (\tau \in [0, r]).
\end{align*}
\end{notation}

Before stating a result, we clarify the Riemann integrability of
\begin{equation*}
	[-r, 0] \ni \xi \mapsto \int_{-r}^\xi \psi(\xi - \theta) \mspace{2mu} d\eta(\theta) \in (\mathbb{C}^n)\dual
\end{equation*}
for each $\psi \in C([0, r], (\mathbb{C}^n)\dual)$.
This is shown by defining a continuous function $\tilde{\psi} \colon (-\infty, r] \to (\mathbb{C}^n)\dual$ as
\begin{equation*}
	\tilde{\psi}(t) =
	\begin{cases}
		\psi(0) & (t \in (-\infty, 0]), \\
		\psi(t) & (t \in [0, r])
	\end{cases}
\end{equation*}
because in the equality
\begin{equation*}
	\int_{-r}^\xi \psi(\xi - \theta) \mspace{2mu} d\eta(\theta)
	= \int_{-r}^0 \tilde{\psi}(\xi - \theta) \mspace{2mu} d\eta(\theta) - \int_\xi^0 \psi(0) \mspace{2mu} d\eta(\theta),
\end{equation*}
the first term of the right-hand side is continuous with respect to $\xi$, and the second term is of bounded variation with respect to $\xi$.
Here we are using the property that a function of bounded variation on a closed and bounded interval is Riemann integrable, which follows from the integration by parts for Stieltjes integrals (see \cite[Theorem~4b in Chapter~I]{Widder_1941}).
We note that a similar argument has appeared in \cite{Nishiguchi_2023}.

\begin{theorem}\label{thm:duality_finite_interval}
For any $u \in C([\alpha - r, \beta], \mathbb{C}^n)$ and $v \in C([\alpha, \beta + r], (\mathbb{C}^n)\dual)$,
\begin{align}\label{eq:duality_finite_interval}
	&\int_\alpha^\beta \cbra*{\rbra*{\int_{-r}^0 v(t - \theta) \mspace{2mu} d\eta(\theta)} u(t) - v(t) \rbra*{\int_{-r}^0 d\eta(\theta) \mspace{2mu} u(t + \theta)}} \mspace{2mu} dt \notag \\
	&= \sbra*{\int_{-r}^0 \rbra*{\int_{-r}^\xi v^t(\xi - \theta) \mspace{2mu} d\eta(\theta)} u_t(\xi) \mspace{2mu} d\xi}_{t = \alpha}^\beta
\end{align}
holds.
\end{theorem}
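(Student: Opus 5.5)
Write $F(t)$ for the bracketed quantity on the right-hand side of \eqref{eq:duality_finite_interval}, i.e.\
\begin{equation*}
	F(t) \coloneqq \int_{-r}^0 \rbra*{\int_{-r}^\xi v(t + \xi - \theta) \mspace{2mu} d\eta(\theta)} u(t + \xi) \mspace{2mu} d\xi .
\end{equation*}
We do not differentiate $F$; instead we compute $F(\beta) - F(\alpha)$ directly as a single double integral and compare it with the left-hand side. Throughout we argue componentwise, as in the proof of Proposition~\ref{prop:duality_compact_support}, so that it suffices to treat scalar $v_i$, $u_j$, $\eta_{ij}$ and the scalar Fubini theorem for Stieltjes integrals \cite[Theorem~15a in Chapter~I]{Widder_1941}.

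\textbf{Step 1 (rewrite $F$).} Substitute $s = t + \xi$ in the outer integral. Then
\begin{equation*}
	F(t) = \int_{t - r}^t \rbra*{\int_{-r}^{s - t} v(s - \theta) \mspace{2mu} d\eta(\theta)} u(s) \mspace{2mu} ds .
\end{equation*}
The region of integration is $\{(s, \theta) : -r \le \theta \le s - t,\ t - r \le s \le t\}$, equivalently $\theta \in [-r, 0]$ with $t + \theta \le s \le t$. Exchanging the order of integration therefore gives
\begin{equation*}
	F(t) = \int_{-r}^0 \rbra*{\int_{t + \theta}^t v(s - \theta) u(s) \mspace{2mu} ds} \mspace{2mu} d\eta(\theta).
\end{equation*}

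\textbf{Step 2 (subtract the endpoints).} From Step 1,
\begin{equation*}
	F(\beta) - F(\alpha) = \int_{-r}^0 \sbra*{\int_\alpha^\beta v(s - \theta) u(s) \mspace{2mu} ds - \int_{\alpha + \theta}^{\beta + \theta} v(s - \theta) u(s) \mspace{2mu} ds} \mspace{2mu} d\eta(\theta).
\end{equation*}
This uses the identity $\int_{\beta + \theta}^\beta - \int_{\alpha + \theta}^\alpha = \int_\alpha^\beta - \int_{\alpha + \theta}^{\beta + \theta}$ for the inner $ds$-integrals. In the second inner integral substitute $s = t + \theta$, which turns it into $\int_\alpha^\beta v(t) u(t + \theta) \mspace{2mu} dt$. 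Both inner integrands are now continuous on rectangles, so Fubini applies once more and gives
\begin{equation*}
	F(\beta) - F(\alpha) = \int_\alpha^\beta \rbra*{\int_{-r}^0 v(t - \theta) \mspace{2mu} d\eta(\theta)} u(t) \mspace{2mu} dt - \int_\alpha^\beta v(t) \rbra*{\int_{-r}^0 d\eta(\theta) \mspace{2mu} u(t + \theta)} \mspace{2mu} dt ,
\end{equation*}
which is exactly the left-hand side of \eqref{eq:duality_finite_interval}.

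\textbf{Main obstacle.} The delicate point is the exchange of order in Step 1. There the inner Stieltjes integral has a variable upper limit $s - t$, so the integrand of the $d\xi$-integral is only of bounded variation plus continuous (as noted before the theorem), not continuous. Widder's Fubini theorem, stated for continuous integrands on rectangles, therefore does not apply directly, and jumps of $\eta$ could in principle matter at the endpoint $\theta = s - t$.

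First attempt: rewrite the triangle integral as a rectangle integral of the continuous function $v(s - \theta) u(s)$ times the indicator of $\{\theta \le s - t\}$. Then split the inner integral as in the displayed identity preceding the theorem, extending $v$ constantly beyond the endpoint $\beta + r$. This leaves a continuous-integrand rectangle part, where Fubini applies, and a boundary term $\int_\xi^0 (\cdots) \mspace{2mu} d\eta$, which is handled by integration by parts for Stieltjes integrals \cite[Theorem~4b in Chapter~I]{Widder_1941}.

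Alternatively, approximate $\eta$ in the weak sense by smooth (or step) functions of uniformly bounded variation, using the normalization $\eta$ right-continuous on $(-r, 0)$. For such approximants the exchange of order is elementary. Passing to the limit via dominated convergence on the $\xi$-integral then concludes the argument, since only countably many $\xi$ are affected by jumps and these form a Lebesgue-null set.
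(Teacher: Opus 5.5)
Your proof is correct and is essentially the paper's argument run in the opposite direction. The paper starts from the left-hand side and applies rectangular Fubini to reach $\int_{-r}^0 \bigl[\int_{s+\theta}^s v(t-\theta)u(t)\,dt\bigr]_{s=\alpha}^{\beta}\,d\eta(\theta)$. It then identifies each endpoint term with the right-hand side by exactly your Step~1 triangular exchange, justified, as in your ``first attempt'', by a constant extension of $v$ plus Stieltjes integration by parts on the term $v(t)\int_{-r}^0[\eta(0)-\eta(\xi)]u(t+\xi)\,d\xi$.

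There is one slip in that justification. The extension must be to the \emph{left} of $t$, by the constant value $v(t)$: that is, $v^t$ extended by $v(t)$ on $(-\infty,0]$, which is the paper's auxiliary function $w$. It is not an extension beyond $\beta+r$. The reason is that enlarging the inner integral from $[-r,\xi]$ to $[-r,0]$ evaluates $v$ at $t+\xi-\theta<t$ for $\theta\in(\xi,0]$. Only a constant there turns the correction term into $v(t)[\eta(0)-\eta(\xi)]$, which is what makes the integration by parts work.
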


\begin{proof}
As in the proof of Proposition~\ref{prop:duality_compact_support}, we proceed componentwise.
Let $i, j \in \{1, 2, \dots, n\}$ be given and let
\begin{align*}
	I &\coloneqq \int_\alpha^\beta v_i(t) \rbra*{\int_{-r}^0 d\eta_{ij}(\theta) \mspace{2mu} u_j(t + \theta)} \mspace{2mu} dt, \\
	J &\coloneqq \int_\alpha^\beta \rbra*{\int_{-r}^0 v_i(t - \theta) \mspace{2mu} d\eta_{ij}(\theta)} u_j(t) \mspace{2mu} dt.
\end{align*}
By Fubini's theorem for Stieltjes integrals, we have
\begin{equation*}
	J
	= \int_{-r}^0 \rbra*{\int_\alpha^\beta v_i(t - \theta)u_j(t) \mspace{2mu} dt} \mspace{2mu} d\eta_{ij}(\theta)
\end{equation*}
and 
\begin{align*}
	I
	&= \int_{-r}^0 \rbra*{\int_\alpha^\beta v_i(t)u_j(t + \theta) \mspace{2mu} dt} \mspace{2mu} d\eta_{ij}(\theta) \\
	&= \int_{-r}^0 \rbra*{\int_{\alpha + \theta}^{\beta + \theta} v_i(t - \theta)u_j(t) \mspace{2mu} dt} \mspace{2mu} d\eta_{ij}(\theta).
\end{align*}
Therefore,
\begin{align*}
	J - I
	&= \int_{-r}^0 \rbra*{\int_{\beta + \theta}^\beta v_i(t - \theta)u_j(t) \mspace{2mu} dt - \int_{\alpha + \theta}^\alpha v_i(t - \theta)u_j(t) \mspace{2mu} dt} \mspace{2mu} d\eta_{ij}(\theta) \\
	&= \int_{-r}^0 \sbra*{\int_{s + \theta}^s v_i(t - \theta)u_j(t) \mspace{2mu} dt}_{s = \alpha}^\beta \mspace{2mu} d\eta_{ij}(\theta)
\end{align*}
holds.
Let $s \in [\alpha, \beta]$ be fixed.
By considering the function $w \colon [\alpha, \beta + r] \to (\mathbb{C}^n)\dual$ defined by
\begin{equation*}
	w(t) =
	\begin{cases}
		v_i(s) & (t \in [\alpha, s]), \\
		v_i(t) & (t \in [s, \beta + r]),
	\end{cases}
\end{equation*}
we have
\begin{align*}
	\int_{s + \theta}^s v_i(t - \theta)u_j(t) \mspace{2mu} dt
	&= \int_{s + \theta}^s w(t - \theta)u_j(t) \mspace{2mu} dt \\
	&= \int_{s - r}^s w(t - \theta)u_j(t) \mspace{2mu} dt - v_i(s)\int_{s - r}^{s + \theta} u_j(t) \mspace{2mu} dt,
\end{align*}
where $t - \theta \le s$ for $t \le s + \theta$ is used.
This yields that the integral $\int_{s + \theta}^s v_i(t - \theta)u_j(t) \mspace{2mu} dt$ is continuous with respect to $\theta \in [-r, 0]$.
Thus, we obtain
\begin{equation*}
	J - I
	= \sbra*{\int_{-r}^0 \rbra*{\int_{s + \theta}^s v_i(t - \theta)u_j(t) \mspace{2mu} dt} \mspace{2mu} d\eta_{ij}(\theta)}_{s = \alpha}^\beta.
\end{equation*}
Let
\begin{equation*}
	K \coloneqq \int_{-r}^0 \rbra*{\int_{s + \theta}^s v_i(t - \theta)u_j(t) \mspace{2mu} dt} \mspace{2mu} d\eta_{ij}(\theta).
\end{equation*}
Then the above discussion indicates that we have
\begin{align*}
	K
	&= \int_{-r}^0 \rbra*{\int_{s - r}^s w(t - \theta)u_j(t)} \mspace{2mu} d\eta_{ij}(\theta) - v_i(s) \int_{-r}^0 \rbra*{\int_{s - r}^{s + \theta} u_j(t) \mspace{2mu} dt} \mspace{2mu} d\eta_{ij}(\theta) \\
	&= \int_{-r}^0 \rbra*{\int_{-r}^0 w(s + \xi - \theta)u_j(s + \xi) \mspace{2mu} d\xi} \mspace{2mu} d\eta_{ij}(\theta) \\
	&\mspace{30mu} - v_i(s) \int_{-r}^0 \rbra*{\int_{-r}^{\theta} u_j(s + \xi) \mspace{2mu} d\xi} \mspace{2mu} d\eta_{ij}(\theta)
\end{align*}
with the change of variable $t = s + \xi$.
Fubini's theorem for Stieltjes integrals yields
\begin{align*}
	&\int_{-r}^0 \rbra*{\int_{-r}^0 w(s + \xi - \theta)u_j(s + \xi) \mspace{2mu} d\xi} \mspace{2mu} d\eta_{ij}(\theta) \\
	&= \int_{-r}^0 \rbra*{\int_{-r}^0 w(s + \xi - \theta) \mspace{2mu} d\eta_{ij}(\theta)} u_j(s + \xi) \mspace{2mu} d\xi,
\end{align*}
and the integration by parts formula for Stieltjes integrals yields
\begin{align*}
	&v_i(s)\int_{-r}^0 \rbra*{\int_{-r}^{\theta} u_j(s + \xi) \mspace{2mu} d\xi} \mspace{2mu} d\eta_{ij}(\theta) \\
	&= v_i(s)\sbra*{\rbra*{\int_{-r}^{\theta} u_j(s + \xi) \mspace{2mu} d\xi}\eta_{ij}(\theta)}_{\theta = -r}^0 - v_i(s)\int_{-r}^0 u_j(s + \theta)\eta_{ij}(\theta) \mspace{2mu} d\theta \\
	&= v_i(s) \int_{-r}^0 u_j(s + \theta)[\eta_{ij}(0) - \eta_{ij}(\theta)] \mspace{2mu} d\theta.
\end{align*}
Thus, 
\begin{align*}
	K
	&= \int_{-r}^0 \rbra*{\int_{-r}^\xi w(s + \xi - \theta) \mspace{2mu} d\eta_{ij}(\theta)} u_j(s + \xi) \mspace{2mu} d\xi \\
	&= \int_{-r}^0 \rbra*{\int_{-r}^\xi v_i(s + \xi - \theta) \mspace{2mu} d\eta_{ij}(\theta)} u_j(s + \xi) \mspace{2mu} d\xi
\end{align*}
is concluded since
\begin{align*}
	&\int_{-r}^0 \rbra*{\int_0^\xi w(s + \xi - \theta) \mspace{2mu} d\eta_{ij}(\theta)} u_j(s + \xi) \mspace{2mu} d\xi \\
	&= v_i(s)\int_{-r}^0 \sbra*{\eta_{ij}(\xi) - \eta_{ij}(0)}u_j(s + \xi) \mspace{2mu} d\xi
\end{align*}
holds in view of $s + \xi - \theta \le s$ for $\xi \le \theta \le 0$.
This shows that \eqref{eq:duality_finite_interval} holds.
\end{proof}

\begin{remark}
The above proof contains a verification of Fubini's theorem for double Stieltjes integrals over a triangular domain, whose argument has been used in \cite{Nishiguchi_2026}.
While this specific triangular formulation is rarely stated explicitly in standard textbooks on classical Stieltjes integrals, it can be rigorously justified by passing to the measure-theoretic framework of Lebesgue--Stieltjes integrals and applying the general Fubini theorem (see, e.g., \cite[Chapter~8]{Rudin_1987}). 
In what follows, this result will be freely utilized.
\end{remark}

Theorem~\ref{thm:duality_finite_interval} leads us to the following results on the Green and Lagrange identities for the RFDE~\eqref{eq:RFDE}, together with the definition of a bilinear form $B$.

Throughout the rest of this paper, we use the following notation.

\begin{notation}
The Banach spaces
\begin{equation*}
	C([-r, 0], \mathbb{C}^n)
	\mspace{10mu} \text{and} \mspace{10mu}
	C([0, r], (\mathbb{C}^n)\dual)
\end{equation*}
are abbreviated as $C$ and $C^*$, respectively.
Accordingly, their norms are denoted by $\norm*{\argdot}_C$ and $\norm*{\argdot}_{C^*}$.
\end{notation}

\begin{definition}\label{dfn:bilinear_form_B}
Let $B \colon C^* \times C \to \mathbb{C}$ be the bilinear form defined by
\begin{equation*}
	B(\psi, \phi)
	= \psi(0)\phi(0) + \int_{-r}^0 \rbra*{\int_{-r}^\xi \psi(\xi - \theta) \mspace{2mu} d\eta(\theta)} \phi(\xi) \mspace{2mu} d\xi
\end{equation*}
for $(\psi, \phi) \in C^* \times C$.
\end{definition}

\begin{theorem}\label{thm:Green_identity}
Let $u \in C([\alpha - r, \beta], \mathbb{C}^n)$ and $v \in C([\alpha, \beta + r], (\mathbb{C}^n)\dual)$ be given.
If the restrictions $u|_{[\alpha, \beta]}$ and $v|_{[\alpha, \beta]}$ are continuously differentiable, then
\begin{align}\label{eq:duality_bilinear_form}
	&\int_\alpha^\beta \cbra*{\rbra*{\dot{v}(t) + \int_{-r}^0 v(t - \theta) \mspace{2mu} d\eta(\theta)} u(t) + v(t) \rbra*{\dot{u}(t) - \int_{-r}^0 d\eta(\theta) \mspace{2mu} u(t + \theta)}} \mspace{2mu} dt \notag \\
	&= \sbra*{B\rbra*{v^t, u_t}}_{t = \alpha}^\beta
\end{align}
holds.
\end{theorem}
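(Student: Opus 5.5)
The plan is to split the integrand on the left-hand side of \eqref{eq:duality_bilinear_form} into two groups. The first group contains only the derivative terms, the second only the Stieltjes-integral terms. Explicitly, the integrand equals
\begin{equation*}
	\bigl\{\dot{v}(t)u(t) + v(t)\dot{u}(t)\bigr\}
	+ \cbra*{\rbra*{\int_{-r}^0 v(t - \theta) \mspace{2mu} d\eta(\theta)} u(t) - v(t) \rbra*{\int_{-r}^0 d\eta(\theta) \mspace{2mu} u(t + \theta)}}.
\end{equation*}
Each group will be integrated over $[\alpha, \beta]$ separately. The two boundary terms produced this way should add up to $B(v^t, u_t)$, because $B$ has exactly two summands.

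For the first group, the hypothesis is that $u|_{[\alpha,\beta]}$ and $v|_{[\alpha,\beta]}$ are $C^1$. Hence the scalar function $t \mapsto v(t)u(t)$, a row vector times a column vector, is $C^1$ on $[\alpha,\beta]$, and its derivative is $\dot v(t)u(t) + v(t)\dot u(t)$ by the product rule. The fundamental theorem of calculus then gives
\begin{equation*}
	\int_\alpha^\beta \bigl\{\dot{v}(t)u(t) + v(t)\dot{u}(t)\bigr\} \mspace{2mu} dt = \sbra*{v(t)u(t)}_{t=\alpha}^\beta = \sbra*{v^t(0)u_t(0)}_{t=\alpha}^\beta.
\end{equation*}
This is the first summand $\psi(0)\phi(0)$ of $B$ evaluated at $(\psi,\phi) = (v^t, u_t)$.

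For the second group, Theorem~\ref{thm:duality_finite_interval} applies verbatim, since it requires only continuity of $u$ on $[\alpha-r,\beta]$ and of $v$ on $[\alpha,\beta+r]$. It identifies the integral of the second group with
\begin{equation*}
	\sbra*{\int_{-r}^0 \rbra*{\int_{-r}^\xi v^t(\xi-\theta) \mspace{2mu} d\eta(\theta)} u_t(\xi) \mspace{2mu} d\xi}_{t=\alpha}^\beta,
\end{equation*}
which is the second summand of $B(v^t,u_t)$ by Definition~\ref{dfn:bilinear_form_B}. This integral is well defined by the Riemann integrability remark preceding Theorem~\ref{thm:duality_finite_interval}. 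Adding the two boundary expressions gives $\sbra*{B(v^t,u_t)}_{t=\alpha}^\beta$, which is \eqref{eq:duality_bilinear_form}.

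There is no real obstacle, since all the analytic work (the Fubini and triangular-domain manipulations) is already done in Theorem~\ref{thm:duality_finite_interval}. The only point needing care is that splitting the integral into two pieces is legitimate. Both pieces are continuous in $t$ on $[\alpha,\beta]$: the derivative terms are continuous by the $C^1$ assumption, and the Stieltjes terms are continuous by the uniform-continuity observation made in the proof of Proposition~\ref{prop:duality_compact_support}. Hence each piece is separately integrable, and linearity of the integral applies.
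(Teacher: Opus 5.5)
Your proposal is correct and is essentially the paper's proof. The paper also obtains the identity directly from Theorem~\ref{thm:duality_finite_interval} by adding $\int_\alpha^\beta \dot{v}(t)u(t)\,dt + \int_\alpha^\beta v(t)\dot{u}(t)\,dt = \bigl[v(t)u(t)\bigr]_{t=\alpha}^{\beta}$, which supplies the $\psi(0)\phi(0)$ term of $B$; your remark on the continuity of each piece, which justifies splitting the integral, is a detail the paper leaves implicit.
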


\begin{proof}
\eqref{eq:duality_bilinear_form} is obtained from Theorem~\ref{thm:duality_finite_interval} since
\begin{equation*}
	\int_\alpha^\beta \dot{v}(t)u(t) \mspace{2mu} dt + \int_\alpha^\beta v(t)\dot{u}(t) \mspace{2mu} dt
	= \sbra*{v(t)u(t)}_{t = \alpha}^\beta.
\end{equation*}
\end{proof}

\begin{corollary}\label{cor:Lagrange_identity}
Let $u \in C([\alpha - r, \beta], \mathbb{C}^n)$ and $v \in C([\alpha, \beta + r], (\mathbb{C}^n)\dual)$ be given.
If the restrictions $u|_{[\alpha, \beta]}$ and $v|_{[\alpha, \beta]}$ are continuously differentiable, then the function
\begin{equation*}
	[\alpha, \beta] \ni t \mapsto B\rbra*{v^t, u_t} \in \mathbb{C}
\end{equation*}
is continuously differentiable and satisfies
\begin{align*}
	&\frac{d}{dt} \mspace{2mu} B\rbra*{v^t, u_t} \\
	&= \rbra*{\dot{v}(t) + \int_{-r}^0 v(t - \theta) \mspace{2mu} d\eta(\theta)} u(t) + v(t) \rbra*{\dot{u}(t) - \int_{-r}^0 d\eta(\theta) \mspace{2mu} u(t + \theta)}
\end{align*}
for all $t \in [\alpha, \beta]$.
\end{corollary}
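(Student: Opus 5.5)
The plan is to apply Theorem~\ref{thm:Green_identity} on subintervals and then use the fundamental theorem of calculus. Fix $t_0 \in [\alpha, \beta]$. For any $t \in [\alpha, \beta]$, the restrictions of $u$ to $[\alpha - r, t]$ (or $[t - r, t_0]$, as appropriate) and of $v$ to the corresponding interval still satisfy the hypotheses of Theorem~\ref{thm:Green_identity}. This holds because continuous differentiability on $[\alpha, \beta]$ passes to any closed subinterval $[a, b] \subset [\alpha, \beta]$. The domains required there are $[a - r, b]$ for $u$ and $[a, b + r]$ for $v$, and these are contained in $[\alpha - r, \beta]$ and $[\alpha, \beta + r]$. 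Applying \eqref{eq:duality_bilinear_form} on $[\alpha, t]$, we obtain
\begin{equation*}
	B\rbra*{v^t, u_t} = B\rbra*{v^\alpha, u_\alpha} + \int_\alpha^t F(s) \mspace{2mu} ds \quad (t \in [\alpha, \beta]),
\end{equation*}
where $F(s)$ denotes the integrand in \eqref{eq:duality_bilinear_form}, that is, the right-hand side of the claimed derivative formula.

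The key step is to check that $F$ is continuous on $[\alpha, \beta]$. Once this is done, the fundamental theorem of calculus for Riemann integrals of continuous functions shows that $t \mapsto B(v^t, u_t)$ is continuously differentiable on $[\alpha, \beta]$, with one-sided derivatives at the endpoints, and that its derivative equals $F(t)$, which is exactly the assertion. The terms $\dot v(t)u(t)$ and $v(t)\dot u(t)$ are continuous because $u|_{[\alpha,\beta]}$ and $v|_{[\alpha,\beta]}$ are $C^1$. For the delay terms, I will argue as in the proof of Proposition~\ref{prop:duality_compact_support}. The map $t \mapsto u_t \in C$ is continuous on $[\alpha, \beta]$ by the uniform continuity of $u$ on the compact interval $[\alpha - r, \beta]$. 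Composing with the bounded linear map $L$ then shows that $t \mapsto \int_{-r}^0 d\eta(\theta)\, u(t+\theta) = Lu_t$ is continuous. Likewise, $t \mapsto v^t \in C^*$ is continuous. The map $\psi \mapsto \int_{-r}^0 \psi(-\theta)\, d\eta(\theta)$ is bounded on $C^*$, with bound given by the total variation of $\eta$. Hence $t \mapsto \int_{-r}^0 v(t-\theta)\, d\eta(\theta)$ is continuous.

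I do not expect a serious obstacle. The only point requiring care is that Theorem~\ref{thm:Green_identity} is genuinely applicable on subintervals, including degenerate ones, where both sides vanish. A second point is that the integrand in \eqref{eq:duality_bilinear_form} is a genuine continuous function, so the identity, valid for all upper limits $t$, yields pointwise differentiability rather than only almost-everywhere differentiability. Both points are handled by the continuity argument above.
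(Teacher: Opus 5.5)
Your proposal is correct and follows the same route as the paper: apply Theorem~\ref{thm:Green_identity} on the subintervals $[\alpha, t]$ and then use the fundamental theorem of calculus. Your explicit check that the integrand is continuous, via continuity of $t \mapsto u_t$ and $t \mapsto v^t$ together with boundedness of $L$ and of $\psi \mapsto \int_{-r}^0 \psi(-\theta)\, d\eta(\theta)$, fills in a detail the paper leaves implicit, and it is exactly what gives continuous differentiability at every $t$ rather than only almost everywhere.
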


\begin{proof}
Since \eqref{eq:duality_bilinear_form} holds for each subinterval $[\alpha', \beta'] \subset [\alpha, \beta]$ by Theorem~\ref{thm:Green_identity}, the conclusion is obtained by applying the fundamental theorem of calculus.
\end{proof}

\begin{corollary}\label{cor:duality_bilinear_form_adjoint}
If $u \in C([\alpha - r, \beta], \mathbb{C}^n)$ and $v \in C([\alpha, \beta + r], (\mathbb{C}^n)\dual)$ are continuously differentiable on $[\alpha, \beta]$ and satisfy
\begin{equation*}
	\dot{u}(t) = \int_{-r}^0 d\eta(\theta) \mspace{2mu} u(t + \theta), \mspace{15mu}
	\dot{v}(t) = -\int_{-r}^0 v(t - \theta) \mspace{2mu} d\eta(\theta)
\end{equation*}
for all $t \in [\alpha, \beta]$, then the function $[\alpha, \beta] \ni t \mapsto B\rbra*{v^t, u_t} \in \mathbb{C}$ is constant.
\end{corollary}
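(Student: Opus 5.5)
This is an immediate consequence of Corollary~\ref{cor:Lagrange_identity}; the plan is simply to substitute the two equations into the Lagrange identity. Since $u$ and $v$ are continuous on $[\alpha - r, \beta]$ and $[\alpha, \beta + r]$ respectively, with continuously differentiable restrictions to $[\alpha, \beta]$, the hypotheses of Corollary~\ref{cor:Lagrange_identity} are met. Hence $t \mapsto B\rbra*{v^t, u_t}$ is continuously differentiable on $[\alpha, \beta]$, and its derivative is given by the displayed formula there.

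Next I insert the two assumed equations into that formula. The equation for $v$ gives
\begin{equation*}
	\dot{v}(t) + \int_{-r}^0 v(t - \theta) \mspace{2mu} d\eta(\theta) = 0
	\quad (t \in [\alpha, \beta]),
\end{equation*}
and the equation for $u$ gives
\begin{equation*}
	\dot{u}(t) - \int_{-r}^0 d\eta(\theta) \mspace{2mu} u(t + \theta) = 0
	\quad (t \in [\alpha, \beta]).
\end{equation*}
Both terms on the right-hand side of the Lagrange identity therefore vanish, so $\frac{d}{dt} B\rbra*{v^t, u_t} = 0$ for every $t \in [\alpha, \beta]$.

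A $C^1$ function on a closed interval with identically zero derivative is constant, by the mean value theorem applied to its real and imaginary parts. Alternatively, one can bypass the corollary and read off from Theorem~\ref{thm:Green_identity} that $\sbra*{B\rbra*{v^t, u_t}}_{t = \alpha'}^{\beta'} = 0$ for every subinterval $[\alpha', \beta'] \subset [\alpha, \beta]$.

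I expect no genuine obstacle. The only point to watch is that differentiability at the endpoints is understood one-sidedly, which is already built into Corollary~\ref{cor:Lagrange_identity}.
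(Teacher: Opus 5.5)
Your proposal is correct and matches the paper's proof, which simply notes that the statement is a consequence of Corollary~\ref{cor:Lagrange_identity}. You substitute the two equations so that both bracketed terms in the Lagrange identity vanish, and then conclude constancy from the zero derivative, which is exactly the intended argument.
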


\begin{proof}
This is a consequence of Corollary~\ref{cor:Lagrange_identity}.
\end{proof}

\subsection{Another Expression of the Bilinear Form $B$}

Finally, we provide another expression for the bilinear form $B$.
We note that the function
\begin{equation*}
	[0, r] \ni \zeta \mapsto \int_{-r}^{-\zeta} d\eta(\theta) \mspace{2mu} \phi(\zeta + \theta) \in \mathbb{C}^n
\end{equation*}
is also Riemann integrable for each $\phi \in C$.

\begin{proposition}\label{prop:bilinear_form_B_alternative}
For every $(\psi, \phi) \in C^* \times C$,
\begin{equation*}
	B(\psi, \phi)
	= \psi(0)\phi(0) + \int_0^r \psi(\zeta) \rbra*{\int_{-r}^{-\zeta} d\eta(\theta) \mspace{2mu} \phi(\zeta + \theta)} \mspace{2mu} d\zeta
\end{equation*}
holds.
\end{proposition}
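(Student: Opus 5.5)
The plan is to show that the double integral term in Definition~\ref{dfn:bilinear_form_B} equals
\begin{equation*}
	\int_0^r \psi(\zeta) \rbra*{\int_{-r}^{-\zeta} d\eta(\theta) \mspace{2mu} \phi(\zeta + \theta)} \mspace{2mu} d\zeta .
\end{equation*}
The $\psi(0)\phi(0)$ terms agree trivially. As in the proofs of Proposition~\ref{prop:duality_compact_support} and Theorem~\ref{thm:duality_finite_interval}, we work componentwise. Fix $i, j$ and consider
\begin{equation*}
	K_{ij} \coloneqq \int_{-r}^0 \rbra*{\int_{-r}^\xi \psi_i(\xi - \theta) \mspace{2mu} d\eta_{ij}(\theta)} \phi_j(\xi) \mspace{2mu} d\xi .
\end{equation*}
This is a double integral over the triangular region $\Set{(\theta, \xi)}{-r \le \theta \le \xi \le 0}$, with Lebesgue measure in $\xi$ and the Stieltjes measure $d\eta_{ij}$ in $\theta$.

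First, we interchange the order of integration using the triangular Fubini theorem for Stieltjes integrals noted in the Remark after Theorem~\ref{thm:duality_finite_interval}. Concretely, we pass to Lebesgue--Stieltjes measures and apply the general Fubini theorem to the bounded Borel integrand $\mathbf{1}_{\{\theta \le \xi\}}\psi_i(\xi - \theta)\phi_j(\xi)$. This gives
\begin{equation*}
	K_{ij} = \int_{-r}^0 \rbra*{\int_\theta^0 \psi_i(\xi - \theta)\phi_j(\xi) \mspace{2mu} d\xi} \mspace{2mu} d\eta_{ij}(\theta).
\end{equation*}
Second, in the inner integral we substitute $\zeta = \xi - \theta \in [0, -\theta]$, so that
\begin{equation*}
	K_{ij} = \int_{-r}^0 \rbra*{\int_0^{-\theta} \psi_i(\zeta)\phi_j(\zeta + \theta) \mspace{2mu} d\zeta} \mspace{2mu} d\eta_{ij}(\theta).
\end{equation*}
Third, we apply the triangular Fubini theorem again, now on the region $\Set{(\theta, \zeta)}{0 \le \zeta \le -\theta}$, equivalently $\theta \le -\zeta$ with $\zeta \in [0, r]$. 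This yields
\begin{equation*}
	K_{ij} = \int_0^r \psi_i(\zeta) \rbra*{\int_{-r}^{-\zeta} \phi_j(\zeta + \theta) \mspace{2mu} d\eta_{ij}(\theta)} \mspace{2mu} d\zeta .
\end{equation*}
Summing over $i$ and $j$ then gives the claimed expression.

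The main obstacle is the rigor of the triangular interchanges. The indicator functions make the integrands discontinuous, so the classical Riemann--Stieltjes Fubini theorem does not apply directly, and the atoms of $d\eta_{ij}$ on the diagonal $\theta = \xi$ (respectively $\theta = -\zeta$) must be handled correctly. I would resolve this exactly as the Remark suggests: use the Lebesgue--Stieltjes (complex) measure $\mu_{ij}$ associated with $\eta_{ij}$, with closed-interval conventions $[-r, \xi]$ and $[-r, -\zeta]$ matching the inclusive endpoints. The diagonal is a Lebesgue-null set for each fixed $\theta$, so the endpoint convention in the $\xi$- and $\zeta$-integrals is irrelevant. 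The resulting measure-theoretic integrals agree with the Riemann integrals of the bounded a.e.-continuous (indeed, by the preceding remarks, Riemann integrable) functions of $\xi$ and $\zeta$.

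If one prefers to avoid measure theory, an alternative is to mimic the proof of Theorem~\ref{thm:duality_finite_interval}. There one extends $\psi$ by the constant $\psi(0)$ to $\tilde{\psi}$ on $(-\infty, r]$, so that the inner integral runs over the full interval $[-r, 0]$ and the rectangle Fubini theorem applies. The correction term, which involves only $\psi(0)$ and $\eta$, is then treated by Stieltjes integration by parts, as was done for $K$ there.
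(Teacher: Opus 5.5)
Your proposal is correct and follows the paper's proof essentially step for step: a componentwise reduction, a triangular Fubini interchange, the substitution $\zeta = \xi - \theta$, and a second triangular Fubini interchange. Your added justification through Lebesgue--Stieltjes measures is the rigor the paper defers to its Remark following Theorem~\ref{thm:duality_finite_interval}; in particular, the diagonal has product-measure zero, so the endpoint conventions do not matter.
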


\begin{proof}
Let $\phi = \col(\phi_1, \dots, \phi_n)$, $\psi = \row(\psi_1, \dots, \psi_n)$, and $\eta = (\eta_{ij})$.
We now show that
\begin{equation*}
	\int_{-r}^0 \rbra*{\int_{-r}^\xi \psi_i(\xi - \theta) \mspace{2mu} d\eta_{ij}(\theta)} \phi_j(\xi) \mspace{2mu} d\xi
	= \int_0^r \psi_i(\zeta) \rbra*{\int_{-r}^{-\zeta} d\eta_{ij}(\theta) \mspace{2mu} \phi_j(\zeta + \theta)} \mspace{2mu} d\zeta
\end{equation*}
holds.
By applying Fubini's theorem for double Stieltjes integrals over a triangular domain, we have
\begin{align*}
	\int_{-r}^0 \rbra*{\int_{-r}^\xi \psi_i(\xi - \theta) \mspace{2mu} d\eta_{ij}(\theta)} \phi_j(\xi) \mspace{2mu} d\xi
	&= \int_{-r}^0 \rbra*{\int_{\theta}^0 \psi_i(\xi - \theta)\phi_j(\xi) \mspace{2mu} d\xi} \mspace{2mu} d\eta_{ij}(\theta) \\
	&= \int_{-r}^0 \rbra*{\int_0^{-\theta} \psi_i(\zeta)\phi_j(\zeta + \theta) \mspace{2mu} d\zeta} \mspace{2mu} d\eta_{ij}(\theta)
\end{align*}
with the change of variable $\xi - \theta = \zeta$.
The repeated application of Fubini's theorem for double Stieltjes integrals over a triangular domain yields
\begin{equation*}
	\int_{-r}^0 \rbra*{\int_0^{-\theta} \psi_i(\zeta)\phi_j(\zeta + \theta) \mspace{2mu} d\zeta} \mspace{2mu} d\eta_{ij}(\theta)
	= \int_0^r \psi_i(\zeta) \rbra*{\int_{-r}^{-\zeta} d\eta_{ij}(\theta) \mspace{2mu} \phi_j(\zeta + \theta)} \mspace{2mu} d\zeta.
\end{equation*}
This completes the proof.
\end{proof}

\subsection{Remarks on the Hale Bilinear Form}

\begin{remark}
In the classic monographs~\cite[Section~21]{Hale_1971_book} and \cite[Section~7.3]{Hale_1977} by Hale, a bilinear form $(\cdot, \cdot) \colon C^* \times C \to \mathbb{C}$ was introduced in an \textit{ad hoc} manner as
\begin{equation}\label{eq:Hale_bilinear_form}
	(\psi, \phi)
	\coloneqq \psi(0)\phi(0) - \int_{-r}^0 \int_0^\theta \psi(\xi - \theta) d\eta(\theta) \phi(\xi) d\xi
\end{equation}
for $(\psi, \phi) \in C^* \times C$.
However, the iterated integral in \eqref{eq:Hale_bilinear_form} is mathematically ill-defined regarding the order of integration and the interpretation of $d\eta(\theta)$.
Specifically, as written, the inner integral is with respect to $\xi$, yet the integrand contains $d\eta(\theta)$ associated with the outer integration variable $\theta$.
Since the product $\psi(\xi - \theta) d\eta(\theta) \phi(\xi)$ cannot be reordered or decoupled freely, the integration cannot be rigorously justified in the framework of standard Stieltjes integrals.
Combining Definition~\ref{dfn:bilinear_form_B}, which is based on Theorem~\ref{thm:duality_finite_interval}, and Proposition~\ref{prop:bilinear_form_B_alternative} solves this ambiguity by providing a rigorous representation via a proper application of Fubini's theorem.
\end{remark}

\begin{remark}
In \cite[Section~7.5]{Hale--VerduynLunel_1993}, a bilinear form $(\cdot, \cdot) \colon C^* \times C \to \mathbb{C}$ was given by
\begin{align*}
	(\psi, \phi)
	&\coloneqq \psi(0)\phi(0) - \int_{-r}^0 \int_{r}^\theta \psi(\theta - \tau) d\eta(\tau) \phi(\theta) d\theta \\
	&= \psi(0)\phi(0) - \int_{-r}^0 \int_0^\theta \psi(\theta - \tau) d\eta(\tau) \phi(\theta) d\theta
\end{align*}
for $(\psi, \phi) \in C^* \times C$, motivated by the adjoint theory in the sense of functional analysis.
While this 1993 formulation shifts the notation of variables, it still inherits a similar structural ambiguity regarding the rigorous justification of the iterated Stieltjes integrals.
For related literature, we refer the reader to \cite[Appendix~A]{Frasson--VerduynLunel_2003}.
\end{remark}

\begin{remark}
In \cite[Subsection~2.2.5]{Guo--Wu_2013}, the following assertion is made without proof: For continuously differentiable functions $u \colon \mathbb{R} \to \mathbb{C}^n$ and $v \colon \mathbb{R} \to (\mathbb{C}^n)\dual$,
\begin{align*}
	&\rbra*{\dot{\bar{v}}(t) + \int_{-r}^0 \bar{v}(t - \theta) \mspace{2mu} d\eta(\theta)} u(t) + \bar{v}(t) \rbra*{\dot{u}(t) - \int_{-r}^0 d\eta(\theta) \mspace{2mu} u(t + \theta)} \\
	&= \frac{d}{dt} \mspace{2mu} \langle v^t, u_t \rangle
\end{align*}
holds.
Here $\langle \cdot, \cdot \rangle \colon C^* \times C \to \mathbb{C}$ is given by
\begin{equation*}
	\langle \psi, \phi \rangle
	\coloneqq \bar{\psi}(0)\phi(0) - \int_{-r}^0 \int_0^\theta \bar{\psi}(\xi - \theta) d\eta(\theta) \phi(\xi) d\xi
\end{equation*}
for $(\psi, \phi) \in C^* \times C$.
It should be noted that, up to complex conjugation, this assertion is none other than the Lagrange identity for the RFDE~\eqref{eq:RFDE}.
We emphasize that Corollary~\ref{cor:Lagrange_identity} establishes this identity in a formulation where any ambiguity regarding the iterated integrals is eliminated.
\end{remark}

\section{Adjoint Equation and Semigroup}\label{sec:adjoint_eq_semigroup}

In this section, we consider the initial value problem (IVP)
\begin{equation}\label{eq:IVP_RFDE}
	\left\{
	\begin{aligned}
		\dot{x}(t) &= Lx_t & (t \ge 0), \\
		x_0 &= \phi
	\end{aligned}
	\right.
\end{equation}
for the RFDE~\eqref{eq:RFDE}, where $\phi \in C$.
It is well known that \eqref{eq:IVP_RFDE} has a unique solution, and the family $(T(t))_{t \ge 0}$ of bounded linear operators on $C$ defined by
\begin{equation*}
	T(t)\phi \coloneqq x(\argdot; \phi)_t
	\quad ((t, \phi) \in [0, \infty) \times C)
\end{equation*}
constitutes a $C_0$-semigroup (see \cite[Lemma 1.2 in Chapter 7]{Hale--VerduynLunel_1993}).
Here
\begin{equation*}
	x(\argdot; \phi) \colon [-r, \infty) \to \mathbb{C}^n
\end{equation*}
denotes the unique solution of \eqref{eq:IVP_RFDE}.

\subsection{Adjoint Equation}

Based on Theorem~\ref{thm:Green_identity} and Corollaries~\ref{cor:Lagrange_identity}, \ref{cor:duality_bilinear_form_adjoint}, we now introduce the following definition.

\begin{definition}
\mbox{}
\begin{itemize}
\item We define a continuous linear map $L^* \colon C^* \to (\mathbb{C}^n)\dual$ by
\begin{equation*}
	L^*\psi = \int_{-r}^0 \psi(-\theta) \mspace{2mu} d\eta(\theta)
\end{equation*}
for $\psi \in C^*$.
\item We call the differential equation
\begin{equation}\label{eq:adjoint_RFDE}
	\dot{y}(t) = -L^*y^t \quad (t \le 0)
\end{equation}
the \textit{adjoint equation} of \eqref{eq:RFDE}.
Here, for a continuous function $y \colon (-\infty, r] \to (\mathbb{C}^n)\dual$, we denote by $y^t \in C^*$ the \textit{future segment} of $y$ at $t \le 0$, which is defined by
\begin{equation*}
	y^t(\tau) = y(t + \tau) \quad (\tau \in [0, r]).
\end{equation*}
\end{itemize}
\end{definition}

For the adjoint equation~\eqref{eq:adjoint_RFDE}, we also consider its IVP
\begin{equation}\label{eq:IVP_adjoint_eq}
	\left\{
	\begin{aligned}
		\dot{y}(t) &= -L^*y^t & (t \le 0), \\
		y^0 &= \psi,
	\end{aligned}
	\right.
\end{equation}
where $\psi \in C^*$.

By considering the function $z \colon [-r, \infty) \to \mathbb{C}^n$ defined by $z(t) = y(-t)^T$ for $t \in [-r, \infty)$, \eqref{eq:adjoint_RFDE} can be written as
\begin{equation*}
	\dot{z}(t) = \int_{-r}^0 d\eta(\theta)^T \mspace{2mu} z(t + \theta) \quad (t \ge 0).
\end{equation*}
Here $\argdot^T$ denotes the matrix transpose.
Therefore, \eqref{eq:IVP_adjoint_eq} has a unique solution, which we denote by
\begin{equation*}
	y(\argdot; \psi) \colon (-\infty, r] \to (\mathbb{C}^n)\dual.
\end{equation*}
It also holds that the family $(T^*(t))_{t \ge 0}$ of linear operators on $C^*$ defined by
\begin{equation*}
	T^*(t)\psi \coloneqq y(\argdot; \psi)^{-t}
	\quad ((t, \psi) \in [0, \infty) \times C^*)
\end{equation*}
is a $C_0$-semigroup.

\subsection{Adjoint Relation with respect to the Bilinear Form $B$}

The following theorem establishes that the $C_0$-semigroup $(T^*(t))_{t \ge 0}$ is ``adjoint" to $(T(t))_{t \ge 0}$ with respect to the bilinear form $B$.

\begin{theorem}\label{thm:adjoint_semigroup}
For every $(\psi, \phi) \in C^* \times C$,
\begin{equation*}
	B(\psi, T(t)\phi) = B(T^*(t)\psi, \phi) \quad (t \ge 0)
\end{equation*}
holds.
\end{theorem}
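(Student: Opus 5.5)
The plan is to apply Corollary~\ref{cor:duality_bilinear_form_adjoint} to a suitable pair $(u, v)$ built from the two solutions. This first needs a density argument, because solutions with merely continuous initial data are not continuously differentiable on the relevant interval. Fix $t > 0$ (the case $t = 0$ is trivial). First suppose that $\phi$ and $\psi$ are sufficiently smooth and satisfy the compatibility conditions. Concretely, take $\phi \in C^1([-r,0],\mathbb{C}^n)$ with $\dot{\phi}(0) = L\phi$, and $\psi \in C^1([0,r],(\mathbb{C}^n)\dual)$ with $\dot{\psi}(0) = -L^*\psi$. Then $x = x(\argdot;\phi)$ is $C^1$ on $[-r,\infty)$ and $y = y(\argdot;\psi)$ is $C^1$ on $(-\infty,r]$.

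Define $u(s) \coloneqq x(s)$ for $s \in [-r, t]$ and $v(s) \coloneqq y(s - t)$ for $s \in [0, t + r]$. We take $[\alpha,\beta] = [0,t]$, so $u \in C([-r, t], \mathbb{C}^n)$ and $v \in C([0, t+r], (\mathbb{C}^n)\dual)$. On $[0,t]$, $u$ satisfies $\dot u(s) = \int_{-r}^0 d\eta(\theta)\, u(s+\theta)$. Since $s - t \le 0$, we also have
\[
\dot v(s) = -L^* y^{s-t} = -\int_{-r}^0 y(s - t - \theta)\, d\eta(\theta) = -\int_{-r}^0 v(s-\theta)\, d\eta(\theta).
\]
Corollary~\ref{cor:duality_bilinear_form_adjoint} therefore gives $B(v^t,u_t) = B(v^0,u_0)$. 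Here $u_t = T(t)\phi$, $u_0 = \phi$, $v^t = y^0 = \psi$, and $v^0 = y^{-t} = T^*(t)\psi$. This is exactly $B(\psi, T(t)\phi) = B(T^*(t)\psi, \phi)$.

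To remove the smoothness assumption, use that $B$ is bounded on $C^* \times C$: one has $\abs{B(\psi,\phi)} \le (1 + r\,\mathrm{Var}(\eta))\norm{\psi}_{C^*}\norm{\phi}_C$, up to a constant, directly from Definition~\ref{dfn:bilinear_form_B}. The operators $T(t)$ and $T^*(t)$ are also bounded. Both sides of the identity are therefore continuous in $(\psi,\phi) \in C^* \times C$, and it suffices to prove it on a dense subset. The natural dense subsets are the domains of the generators, $D(A) = \{\phi \in C^1 : \dot\phi(0) = L\phi\}$ and the analogous $D(A^*)$ for the transposed equation. These are dense because generators of $C_0$-semigroups are densely defined, and for such data the solutions are $C^1$, by the standard theory cited from Hale--Verduyn Lunel, applied to $z(t)=y(-t)^T$ in the adjoint case.

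The main obstacle is this regularity and density step. One must check that initial data in the generator domain give solutions that are $C^1$ on the whole interval, including across $s=0$, which is where the compatibility condition is needed. Alternatively, it would be enough to use only $C^1$ regularity on $[0,t]$, which holds for every $\phi \in C$ since $\dot x(s) = Lx_s$ is continuous for $s \ge 0$. Indeed Theorem~\ref{thm:Green_identity} only requires $u|_{[\alpha,\beta]}$ and $v|_{[\alpha,\beta]}$ to be $C^1$. I would check this second route first, since it might avoid density altogether: $x$ is $C^1$ on $[0,t]$ for any continuous $\phi$, and $v$ is $C^1$ on $[0,t]$ because $y$ is $C^1$ on $(-\infty,0]$. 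If that holds, the density argument is unnecessary.
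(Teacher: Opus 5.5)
Your proof is correct, and the ``second route'' you sketch at the end is exactly the paper's proof. The paper fixes $\beta>0$, sets $\tilde y(s)=y(s-\beta;\psi)$ on $[0,\beta]$, and applies Corollary~\ref{cor:duality_bilinear_form_adjoint} directly. This gives $B(\tilde y^\beta,x_\beta)=B(\tilde y^0,x_0)$ with $\tilde y^\beta=\psi$ and $\tilde y^0=T^*(\beta)\psi$, with no density step at all.

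The premise behind your main density argument is wrong, so you can drop it. The corollary needs $C^1$ regularity only on $[\alpha,\beta]=[0,t]$. For every $\phi\in C$ one has $x(s)=\phi(0)+\int_0^s Lx_\sigma\,d\sigma$ on $[0,\infty)$, and $s\mapsto Lx_s$ is continuous. Hence $x|_{[0,t]}$ is $C^1$, with a one-sided derivative at $0$, and likewise $y(\argdot;\psi)|_{[-t,0]}$ is $C^1$. The possible kink of $x$ at $s=0$, where $\phi$ is merely continuous, lies outside the interval on which differentiability is required.

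The density version is nonetheless valid: $B$, $T(t)$ and $T^*(t)$ are bounded, and $\mathcal{D}(A)$ and $\mathcal{D}(A^*)$ are dense. But it needs extra facts the direct route avoids: the explicit description of the generator domains, and global $C^1$ regularity of solutions with compatible initial data.
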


\begin{proof}
Let $\beta > 0$ be fixed, and let $x \coloneqq x(\argdot; \phi)|_{[0, \beta]}$ and $y \coloneqq y(\argdot; \psi)|_{[-\beta, 0]}$.
Since the function $\tilde{y} \colon [0, \beta] \to (\mathbb{C}^n)\dual$ defined by $\tilde{y}(t) = y(t - \beta)$ for $t \in [0, \beta]$ satisfies
\begin{equation*}
	\frac{d}{dt} \mspace{2mu} \tilde{y}(t) = -L^*\tilde{y}^t
	\quad (t \in [0, \beta]),
\end{equation*}
Corollary~\ref{cor:duality_bilinear_form_adjoint} yields
\begin{equation*}
	B\rbra[\big]{\tilde{y}^\beta, x_\beta} = B\rbra*{\tilde{y}^0, x_0}.
\end{equation*}
This implies $B(\psi, T(\beta)\phi) = B(T^*(\beta)\psi, \phi)$ because we have
\begin{equation*}
	\tilde{y}^\beta = y^0 = \psi, \mspace{15mu} \tilde{y}^0 = y^{-\beta} = T^*(\beta)\psi.
\end{equation*}
Since $\beta > 0$ is arbitrary, this completes the proof.
\end{proof}

It follows from Definition~\ref{dfn:bilinear_form_B} that the bilinear form $B \colon C^* \times C \to \mathbb{C}$ is bounded, i.e., there is a constant $M > 0$ such that
\begin{equation*}
	\abs*{B(\psi, \phi)}
	\le M\norm*{\psi}_{C^*}\norm*{\phi}_{C}
\end{equation*}
holds for all $(\psi, \phi) \in C^* \times C$.
By combining this boundedness and Theorem~\ref{thm:adjoint_semigroup}, we obtain the following corollary.

\begin{corollary}
Let $A$ and $A^*$ be the infinitesimal generators of the $C_0$-semigroups $(T(t))_{t \ge 0}$ and $(T^*(t))_{t \ge 0}$, respectively. Then
\begin{equation*}
	B(\psi, A\phi) = B(A^*\psi, \phi)
\end{equation*}
holds for all $\phi \in \mathcal{D}(A)$ and $\psi \in \mathcal{D}(A^*)$, where $\mathcal{D}(A)$ and $\mathcal{D}(A^*)$ denote the domains of $A$ and $A^*$, respectively.
\end{corollary}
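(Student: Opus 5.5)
Fix $\phi \in \mathcal{D}(A)$ and $\psi \in \mathcal{D}(A^*)$, and differentiate the identity of Theorem~\ref{thm:adjoint_semigroup} at $t = 0^+$. For every $t > 0$, Theorem~\ref{thm:adjoint_semigroup} applied to the pair $(\psi, \phi)$ and bilinearity give
\begin{equation*}
	B\rbra*{\psi, \frac{T(t)\phi - \phi}{t}}
	= B\rbra*{\frac{T^*(t)\psi - \psi}{t}, \phi},
\end{equation*}
since $B(\psi, \phi)$ appears on both sides and cancels. The left-hand side is $B(\psi, \argdot)$ evaluated at the difference quotient of $(T(t))_{t \ge 0}$ at $\phi$. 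The right-hand side is $B(\argdot, \phi)$ evaluated at the difference quotient of $(T^*(t))_{t \ge 0}$ at $\psi$.

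Next I pass to the limit $t \downarrow 0$. Because $\phi \in \mathcal{D}(A)$, the quotient $(T(t)\phi - \phi)/t$ converges to $A\phi$ in $C$. Because $\psi \in \mathcal{D}(A^*)$, the quotient $(T^*(t)\psi - \psi)/t$ converges to $A^*\psi$ in $C^*$. The boundedness estimate $\abs*{B(\psi, \phi)} \le M\norm*{\psi}_{C^*}\norm*{\phi}_C$ shows that $B(\psi, \argdot)$ is continuous on $C$ and $B(\argdot, \phi)$ is continuous on $C^*$. Hence the left side converges to $B(\psi, A\phi)$ and the right side to $B(A^*\psi, \phi)$, which gives the claimed identity.

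The only point needing care is the boundedness of $B$, which the paper asserts follows from Definition~\ref{dfn:bilinear_form_B}. If I have to justify it, I would bound $\abs*{\psi(0)\phi(0)} \le \norm*{\psi}_{C^*}\norm*{\phi}_C$. For the integral term, the inner Stieltjes integral satisfies
\begin{equation*}
	\abs*{\int_{-r}^\xi \psi(\xi - \theta) \mspace{2mu} d\eta(\theta)} \le \Var(\eta)\norm*{\psi}_{C^*},
\end{equation*}
so integrating in $\xi$ over $[-r, 0]$ gives the constant $M = 1 + r\Var(\eta)$. Once this is in place, the rest is routine.
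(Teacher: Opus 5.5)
Your proposal is correct and follows the paper's own proof: subtract $B(\psi,\phi)$ from both sides of $B(\psi, T(t)\phi) = B(T^*(t)\psi, \phi)$, divide by $t$, and let $t \downarrow 0$ using the boundedness of $B$. Your explicit constant $M = 1 + r\Var(\eta)$ is a valid justification of the boundedness that the paper only asserts follows from the definition of $B$.
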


\begin{proof}
Since $B$ is bilinear, Theorem~\ref{thm:adjoint_semigroup} implies that 
\begin{equation*}
	B\rbra*{\psi, \frac{1}{t}(T(t)\phi - \phi)}
	= B\rbra*{\frac{1}{t}(T^*(t)\psi - \psi), \phi}
\end{equation*}
holds for all $(\psi, \phi) \in C^* \times C$ and $t > 0$.
Then the conclusion is obtained by taking the limit as $t \to 0$ in the above equation, in view of the boundedness of $B$.
\end{proof}

\section{Mild Solution and Semigroup}\label{sec:mild_sol_semigroup}

\subsection{Review of Mild Solutions}\label{subsec:review_mild_sols}

We first briefly review the notion of mild solutions introduced in \cite{Nishiguchi_2023, Nishiguchi_2026}.

Let $\mathcal{L}^1_\mathrm{loc}([-r, \infty), \mathbb{C}^n)$ denote the set of all $\mathbb{C}^n$-valued locally integrable functions on $[-r, \infty)$.
In this paper, integrable functions are understood to be defined almost everywhere with respect to the specified measure.

A function $x \in \mathcal{L}^1_\mathrm{loc}([-r, \infty), \mathbb{C}^n)$ is said to be a \textit{mild solution} to \eqref{eq:RFDE} if (i) $x$ is defined on $[0, \infty)$, and (ii)
\begin{equation*}
	x(t) = x(0) + L\int_0^t x_s \mspace{2mu} ds
\end{equation*}
holds for all $t \ge 0$.
Here, for each $t \ge 0$, $\int_0^t x_s \mspace{2mu} ds \in C$ is defined as
\begin{equation*}
	\rbra*{\int_0^t x_s \mspace{2mu} ds}(\theta) = \int_0^t x(s + \theta) \mspace{2mu} ds
	\quad (\theta \in [-r, 0]),
\end{equation*}
which is referred to as the \textit{retarded integral} of $x$ over $[0, t]$ following \cite{Nishiguchi_2026}.

\begin{remark}
For $x \in \mathcal{L}^1_\mathrm{loc}([-r, \infty), \mathbb{C}^n)$, it holds that the function
\begin{equation*}
	[0, \infty) \ni t \mapsto \int_0^t x_s \mspace{2mu} ds \in C
\end{equation*}
is continuous (see \cite{Nishiguchi_2023}).
Therefore, every mild solution to \eqref{eq:RFDE} is continuous on $[0, \infty)$.
\end{remark}

\begin{remark}\label{rmk:usual_mild}
When $x \colon [-r, \infty) \to \mathbb{C}^n$ is continuous, $\int_0^t x_s \mspace{2mu} ds$ coincides with the Riemann integral of the vector-valued continuous function $[0, t] \ni s \mapsto x_s \in C$ (see \cite{Nishiguchi_2023}).
Therefore,
\begin{equation}\label{eq:retarded_Riemann}
	L\int_0^t x_s \mspace{2mu} ds = \int_0^t Lx_s \mspace{2mu} ds
\end{equation}
holds in this case.
The above equality is also directly proved by applying Fubini's theorem for Stieltjes integrals in view of
\begin{equation*}
	\int_{-r}^0 d\eta(\theta) \rbra*{\int_0^t x(s + \theta) \mspace{2mu} ds}
	= \int_0^t \rbra*{\int_{-r}^0 d\eta(\theta) \mspace{2mu} x(s + \theta)} \mspace{2mu} ds.
\end{equation*}
The combination of the notion of mild solutions and \eqref{eq:retarded_Riemann} yields that usual solutions of \eqref{eq:RFDE} are mild solutions.
\end{remark}

Throughout the rest of this paper, we will use the following notations.

\begin{notation}
Let $m$ denote the sum of the Lebesgue measure on $[-r, 0]$ and the Dirac measure at $0$.
Following \cite{Nishiguchi_2026}, we call $m$ the \textit{memory measure} on $[-r, 0]$.
\end{notation}

\begin{notation}
For each $1 \le p \le \infty$, we define the Banach space $M^p([-r, 0], \mathbb{C}^n)$ as
\begin{equation*}
	M^p([-r, 0], \mathbb{C}^n) \coloneqq L^p(([-r, 0], m), \mathbb{C}^n)
\end{equation*}
endowed with the norm
\begin{equation*}
	\norm*{\argdot}_{M^p} \coloneqq \norm*{\argdot}_{L^p([-r, 0], m)}.
\end{equation*}
Here $([-r, 0], m)$ denotes the measure space $[-r, 0]$ equipped with the measure $m$.
For simplicity, $M^p([-r, 0], \mathbb{C}^n)$ will be abbreviated as $M^p$.
\end{notation}

\begin{remark}
As is established in \cite{Nishiguchi_2026}, the above measure-theoretic formulation provides a clear framework for RFDEs.
It is worth noting that while Delfour and Mitter~\cite{Delfour--Mitter_1972_hereditary} originally defined the $M^p$-space for $1 \le p < \infty$ by using a seminorm containing the point evaluation $\abs*{\phi(0)}$, the absence of an underlying measure rendered the functional-analytic framework intricate.
To circumvent this complexity, subsequent studies (e.g., \cite{Webb_1976, Bernier--Manitius_1978, Delfour_1980, Delfour--Manitius_1980, Manitius_1980, Salamon_1985}) adopted the product space 
\begin{equation*}
	L^p([-r, 0], \mathbb{C}^n) \times \mathbb{C}^n.
\end{equation*}
In contrast, our memory-measure approach unifies these perspectives into a single $L^p$-space without separating the boundary value from the history.
See also \cite{Nishiguchi_2026} for the above discussion.
\end{remark}

Mild solutions to \eqref{eq:RFDE} can be formulated as an IVP by imposing an initial condition of the form
\begin{equation*}
	x(\theta) = \phi(\theta) \quad (\text{$m$-a.e.\ $\theta \in [-r, 0]$}),
\end{equation*}
where $\phi \in M^1$.
We write the above initial condition as $x_0 = \phi$.
Then the IVP is expressed as
\begin{equation}\label{eq:IVP_RFDE_mild}
	\left\{
	\begin{aligned}
		x(t) &= \phi(0) + L\int_0^t x_s \mspace{2mu} ds & (t \ge 0), \\
		x_0 &= \phi
	\end{aligned}
	\right.
\end{equation}
where $\phi \in M^1$.

\subsection{Existence and Uniqueness: Revisited}

On the above IVP~\eqref{eq:IVP_RFDE_mild}, we recall the following result established by \cite{Nishiguchi_2023}.

\begin{theorem}[\cite{Nishiguchi_2023}]\label{thm:unique_existence}
For any $\phi \in M^1$, \eqref{eq:IVP_RFDE_mild} has a unique solution.
\end{theorem}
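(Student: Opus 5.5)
We prove existence and uniqueness for \eqref{eq:IVP_RFDE_mild} by reducing it to a Volterra-type fixed-point problem on $[0,\beta]$ for each $\beta>0$. The key point is that the initial function enters only through a known forcing term.

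Fix $\phi \in M^1$ and $\beta > 0$. For a function $y \in C([0, \beta], \mathbb{C}^n)$, let $\hat{y}$ denote the concatenation that equals $\phi$ ($m$-a.e.) on $[-r, 0]$ and equals $y$ on $[0, \beta]$. Since $m$ has an atom at $0$, the value $\phi(0)$ is well defined. The natural compatibility condition is $y(0) = \phi(0)$, and it holds automatically for any solution because the retarded integral vanishes at $t = 0$. We then split the retarded integral as
\begin{equation*}
	\rbra*{\int_0^t \hat{y}_s \mspace{2mu} ds}(\theta) = \int_{\theta}^{t+\theta} \hat{y}(\sigma) \mspace{2mu} d\sigma
	= \int_{\theta}^{\min(0, t+\theta)} \phi(\sigma) \mspace{2mu} d\sigma + \int_{\max(0,\theta)}^{t+\theta} y(\sigma) \mspace{2mu} d\sigma .
\end{equation*}
Both pieces are continuous in $\theta$, the first because $\phi \in L^1$ on $[-r,0]$. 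So we may define
\begin{equation*}
	f(t) \coloneqq \phi(0) + L\rbra*{\int_{\theta}^{\min(0, t+\theta)} \phi(\sigma) \mspace{2mu} d\sigma}_{\theta \in [-r, 0]}, \qquad (\mathcal{K}y)(t) \coloneqq L\rbra*{\int_{\max(0,\theta)}^{t+\theta} y(\sigma) \mspace{2mu} d\sigma}_{\theta \in [-r,0]} .
\end{equation*}
Here $f$ is continuous by the continuity remark on retarded integrals. The IVP on $[0, \beta]$ becomes the fixed-point equation $y = f + \mathcal{K}y$ in $C([0,\beta],\mathbb{C}^n)$.

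The main step is to show that $\mathcal{K}$ is a Volterra operator with a factorial bound. Let $\|\eta\|$ denote the total variation of $\eta$. The $\theta$-function inside $\mathcal{K}y$ has sup norm at most $\int_0^t |y(\sigma)|\,d\sigma$, so
\begin{equation*}
	\abs{(\mathcal{K}y)(t)} \le \|\eta\| \int_0^t \abs{y(\sigma)} \mspace{2mu} d\sigma .
\end{equation*}
By induction, $\abs{(\mathcal{K}^k y)(t)} \le (\|\eta\| t)^k \norm{y}/k!$. Hence $\mathcal{K}^k$ is a contraction for large $k$, and equivalently the Neumann series $\sum_k \mathcal{K}^k f$ converges. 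This gives a unique continuous solution on $[0,\beta]$. An alternative is to use the weighted norm $\sup_t e^{-\lambda t}|y(t)|$ with $\lambda > \|\eta\|$, under which $\mathcal{K}$ is itself a contraction.

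Uniqueness on each $[0,\beta]$ forces the solutions on different intervals to agree on overlaps, so they patch together to a solution on $[0,\infty)$. Any mild solution is continuous on $[0,\infty)$ by the earlier remark, so its restriction to $[0,\beta]$ is a fixed point of the above equation. It therefore coincides with the constructed solution, which proves uniqueness.

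The only delicate point is the splitting of the retarded integral. One must check that a solution $x$ of \eqref{eq:IVP_RFDE_mild}, which a priori is only $m$-a.e. equal to $\phi$ on $[-r,0]$, produces exactly $f+\mathcal{K}(x|_{[0,\beta]})$. This holds because Lebesgue integrals do not see null sets, and the atom at $0$ fixes $x(0)=\phi(0)$. I expect this bookkeeping, rather than the contraction estimate, to need the most care.
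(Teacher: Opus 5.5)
Your argument is correct, but it takes a different route from the paper's own proof. You prove existence directly by a Volterra fixed-point argument; this is the kind of proof the paper attributes to the original reference. The paper instead gives a dense-extension argument, which runs as follows:
\begin{enumerate}
\item It takes classical existence and uniqueness for $\phi \in C$ for granted.
\item Gronwall then gives $\abs{x(t;\phi)} \le \max\{1,\norm{L}\}\norm{\phi}_{M^1}e^{\norm{L}t}$. Hence $C \ni \phi \mapsto x(\argdot;\phi)|_{[0,T]} \in C([0,T],\mathbb{C}^n)$ is continuous for the $M^1$-norm and extends uniquely to $M^1$ by Proposition~\ref{prop:density_C_Mp}.
\item The integral equation is recovered by passing to the limit in the retarded integral, using $\norm{\int_0^t x(\argdot;\phi)_s\,ds - \int_0^t x(\argdot;\phi_j)_s\,ds}_C \le \norm{\phi-\phi_j}_{M^1} + t\sup_{[0,t]}\abs{x(\argdot;\phi)-x(\argdot;\phi_j)}$.
\item Uniqueness rests on essentially your Volterra estimate, $\abs{x(t)}\le\norm{L}\int_0^t\abs{x(s)}\,ds$ for zero data, followed by Gronwall.
\end{enumerate}

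What each route buys:
\begin{itemize}
\item Your approach is self-contained. It does not presuppose the classical theory for continuous data (it essentially reproves it), and it gives the solution explicitly as a Neumann series.
\item The paper's approach exhibits the mild solution as the unique continuous extension of classical solutions, which is the viewpoint the rest of the paper builds on. Its $M^1$ estimate is reused for the boundedness of $S(t)$ in Theorem~\ref{thm:mild_dynamics} and for Corollary~\ref{cor:C_0-semigroup_density}.
\item You lose nothing here: Gronwall applied to $\abs{y}\le\abs{f}+\norm{\eta}\int_0^t\abs{y}$, with $\abs{f(t)}\le\abs{\phi(0)}+\norm{\eta}\norm{\phi}_{L^1}$, yields the same kind of estimate.
\end{itemize}

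One small bookkeeping correction. Since $\theta\le 0$, you always have $\max(0,\theta)=0$, so the second piece of your splitting should read $\int_0^{\max(0,t+\theta)} y(\sigma)\,d\sigma$. As written, for $t+\theta<0$ it reads as an oriented integral over $[t+\theta,0]$, where $y$ is not defined, whereas it must vanish. With that fix, your $f(0)=\phi(0)$, $(\mathcal{K}y)(0)=0$ and the bound $\abs{(\mathcal{K}y)(t)}\le\norm{\eta}\int_0^t\abs{y}$ all go through as you state.
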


In \cite{Nishiguchi_2023}, the direct proof of Theorem~\ref{thm:unique_existence} is given, which is based on the fixed point theorem.
Using the density of $C$ in $M^1$ (see Proposition~\ref{prop:density_C_Mp} given below), an alternative proof of Theorem~\ref{thm:unique_existence} is also possible by assuming the existence and uniqueness of solutions to \eqref{eq:IVP_RFDE}.

\begin{proposition}\label{prop:density_C_Mp}
For each $1 \le p < \infty$, the subset $C$ is dense in $M^p$.
\end{proposition}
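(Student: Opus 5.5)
Since $m = \lambda|_{[-r,0]} + \delta_0$, the norm on $M^p$ splits as
\begin{equation*}
	\norm*{\phi}_{M^p}^p = \int_{-r}^0 \abs*{\phi(\theta)}^p \mspace{2mu} d\theta + \abs*{\phi(0)}^p ,
\end{equation*}
where $\phi(0)$ is well defined because $m(\{0\}) = 1$. The plan is to approximate a given $\phi \in M^p$ in two stages. First we approximate its Lebesgue part by a continuous function, using the classical density of continuous functions in $L^p([-r,0], \mathbb{C}^n)$. Then we correct the value at $0$ by a localized perturbation whose $L^p$-cost is arbitrarily small.

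Fix $\phi \in M^p$ and $\varepsilon > 0$.

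\emph{Step 1.} Choose $g \in C$ with $\int_{-r}^0 \abs*{\phi(\theta) - g(\theta)}^p \mspace{2mu} d\theta < \varepsilon^p$. This is the standard density of $C([-r,0],\mathbb{C}^n)$ in $L^p$ with respect to Lebesgue measure for $1 \le p < \infty$. Note that $g(0)$ need not equal $\phi(0)$.

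\emph{Step 2.} For $0 < \delta < r$, let $\chi_\delta \colon [-r,0] \to [0,1]$ be the continuous piecewise linear function with $\chi_\delta(0) = 1$ and $\chi_\delta = 0$ on $[-r,-\delta]$. Set
\begin{equation*}
	g_\delta \coloneqq g + \chi_\delta \cdot (\phi(0) - g(0)) .
\end{equation*}
Then $g_\delta \in C$ and $g_\delta(0) = \phi(0)$, so the Dirac part of the error vanishes. The Lebesgue part satisfies
\begin{equation*}
	\norm*{\phi - g_\delta}_{L^p} \le \norm*{\phi - g}_{L^p} + \abs*{\phi(0) - g(0)} \, \delta^{1/p} ,
\end{equation*}
where we used Minkowski's inequality and the bound $\int_{-r}^0 \abs*{\chi_\delta}^p \mspace{2mu} d\theta \le \delta$.

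\emph{Step 3.} Choose $\delta$ small enough that the second term is below $\varepsilon$. This gives $\norm*{\phi - g_\delta}_{M^p} < 2\varepsilon$, which proves density.

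The only point needing care is that elements of $M^p$ are $m$-equivalence classes. Since $m(\{0\}) > 0$, the value $\phi(0)$ is determined by the class, and the $m$-null sets are exactly the Lebesgue-null sets that do not contain $0$. Hence the decomposition of the norm above is legitimate, and the approximation from Step 1 does not interact with the point $0$.

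The main obstacle is conceptual rather than technical: the approximant must match $\phi(0)$ exactly and simultaneously approximate $\phi$ in $L^p$ on $[-r,0]$. The cutoff $\chi_\delta$ decouples these two requirements, and this works because the point $0$ has Lebesgue measure zero while $p < \infty$. The argument genuinely needs $p < \infty$; for $p = \infty$ it fails, because $\norm*{\chi_\delta}_{L^\infty} = 1$ for every $\delta$.
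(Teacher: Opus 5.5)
Your proof is correct and uses the same idea as the paper. Both approximate the Lebesgue part of $\phi$ and then fix the value at the atom $0$ with a continuous function of small $L^p$-norm, which is possible because $\{0\}$ is Lebesgue-null and $p<\infty$; the only difference is that the paper first approximates $\phi$ by simple functions in $M^p$ and corrects each of those, whereas you invoke the density of $C$ in Lebesgue $L^p$ directly, which states explicitly a step the paper leaves implicit.
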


\begin{proof}
Let $\phi \in M^p$ be given.
We can choose a sequence $(s_j)_{j = 1}^\infty$ of simple functions from $[-r, 0]$ to $\mathbb{C}^n$ such that $s_j \to \phi$ pointwise $m$-a.e., and $\norm*{\phi - s_j}_{M^p} \to 0$.
We note that for every $\varepsilon > 0$ and $x \in \mathbb{C}^n$, there is a $\chi \in C$ such that
\begin{equation*}
	\chi(0) = x
	\mspace{10mu} \text{and} \mspace{10mu}
	\norm*{\bm{1}_{\{0\}}x - \chi}_{L^p[-r, 0]} < \varepsilon,
\end{equation*}
where $\rbra*{\bm{1}_{\{0\}}x}(\theta) \coloneqq 0$ for $\theta \in [-r, 0)$, and $\rbra*{\bm{1}_{\{0\}}x}(\theta) \coloneqq x$ for $\theta = 0$.
By utilizing this fact, for each $j$, we can also choose a $\chi_j \in C$ such that
\begin{equation*}
	\chi_j(0) = s_j(0)
	\mspace{10mu} \text{and} \mspace{10mu}
	\norm*{s_j - \chi_j}_{L^p[-r, 0]} < \frac{1}{j}.
\end{equation*}
Since this implies $\norm*{s_j - \chi_j}_{M^p} < 1/j$, we have
\begin{equation*}
	\norm*{\phi - \chi_j}_{M^p}
	\le \norm*{\phi - s_j}_{M^p} + \norm*{s_j - \chi_j}_{M^p}
	\to 0
\end{equation*}
as $j \to \infty$.
This completes the proof.
\end{proof}

\begin{proof}[Another proof of Theorem~\ref{thm:unique_existence}]
\textbf{Step 1: Preliminary.}
Let $\phi \in C$ and $t \ge 0$ be given.
From \eqref{eq:retarded_Riemann}, the (usual) solution $x(t; \phi)$ satisfies
\begin{equation*}
	x(t; \phi) = \phi(0) + L\int_0^t x(\argdot; \phi)_s \mspace{2mu} ds.
\end{equation*}
Since 
\begin{align*}
	\norm*{\int_0^t x(\argdot; \phi)_s \mspace{2mu} ds}_{C}
	&\le \sup_{\theta \in [-r, 0]} \int_0^t \abs*{x(s + \theta; \phi)} \mspace{2mu} ds \\
	&\le \int_{-r}^0 \abs{x(s; \phi)} \mspace{2mu} ds + \int_0^t \abs*{x(s; \phi)} \mspace{2mu} ds,
\end{align*}
we have
\begin{align*}
	\abs*{x(t; \phi)}
	&\le \abs*{\phi(0)} + \norm*{L}\rbra*{\int_{-r}^0 \abs*{\phi(\theta)} \mspace{2mu} d\theta + \int_0^t \abs*{x(s; \phi)} \mspace{2mu} ds} \\
	&\le \max\{1, \norm*{L}\}\norm*{\phi}_{M^1} + \norm*{L}\int_0^t \abs*{x(s; \phi)} \mspace{2mu} ds.
\end{align*}
Here $\norm*{L}$ denotes the operator norm of $L$.
Since the above inequality holds for all $t \ge 0$,
\begin{equation}\label{eq:continuity_phi}
	\abs*{x(t; \phi)}
	\le \max\{1, \norm*{L}\}\norm*{\phi}_{M^1} e^{\norm*{L}t}
	\quad (t \ge 0)
\end{equation}
is obtained by applying Gronwall's inequality.

Let $T > 0$ be fixed.
\eqref{eq:continuity_phi} implies that the linear map
\begin{equation*}
	C \ni \phi \mapsto x(\argdot; \phi)|_{[0, T]} \in C([0, T], \mathbb{C}^n)
\end{equation*}
is continuous with respect to $\norm{\argdot}_{M^1}$.
Since the subset $C$ is dense in $M^1$, the above map uniquely extends to the continuous linear map $M^1 \ni \phi \mapsto x(\argdot; \phi)|_{[0, T]} \in C([0, T], \mathbb{C}^n)$.
By applying the above argument for each $T > 0$, one can show that there is a unique linear map $\mathcal{T} \colon M^1 \to C([0, \infty), \mathbb{C}^n)$ such that
\begin{equation*}
	\mathcal{T}\phi = x(\argdot; \phi)|_{[0, \infty)}
\end{equation*}
holds for all $\phi \in C$.
For each $\phi \in M^1$, let $x(\argdot; \phi)$ be the element of $\mathcal{L}^1_\mathrm{loc}([-r, \infty), \mathbb{C}^n)$ defined by $x(\argdot; \phi)|_{[0, \infty)} \coloneqq \mathcal{T}\phi$ and $x(\theta; \phi) \coloneqq \phi(\theta)$ for $m$-a.e.\ $\theta \in [-r, 0]$.
\smallskip

\textbf{Step 2: Existence.}
Let $\phi \in M^1$ be given.
We choose a sequence $(\phi_j)_{j = 1}^\infty$ in $C$ converging to $\phi$ in $M^1$.
For every $j$, we have
\begin{equation*}
	x(t; \phi_j) = \phi_j(0) + L\int_0^t x(\argdot; \phi_j)_s \mspace{2mu} ds
	\quad (t \ge 0)
\end{equation*}
from \eqref{eq:retarded_Riemann}.
Here $\phi_j(0) \to \phi(0)$, and $x(t; \phi_j) \to x(t; \phi)$ for $t \ge 0$.
Furthermore, we have
\begin{align*}
	&\norm*{\int_0^t x(\argdot; \phi)_s \mspace{2mu} ds - \int_0^t x(\argdot; \phi_j)_s \mspace{2mu} ds}_{C} \\
	&\le \int_{-r}^0 \abs*{\phi(\theta) - \phi_j(\theta)} \mspace{2mu} d\theta + \int_0^t \abs*{x(s; \phi) - x(s; \phi_j)} \mspace{2mu} ds \\
	&\le \norm*{\phi - \phi_j}_{M^1} + t \cdot \sup_{s \in [0, t]} \abs*{x(s; \phi) - x(s; \phi_j)},
\end{align*}
which implies that
\begin{equation*}
	L\int_0^t x(\argdot; \phi_j)_s \mspace{2mu} ds \to L\int_0^t x(\argdot; \phi)_s \mspace{2mu} ds
\end{equation*}
by the continuity of $L$.
Therefore, we obtain
\begin{equation*}
	x(t; \phi) = \phi(0) + L\int_0^t x(\argdot; \phi)_s \mspace{2mu} ds
	\quad (t \ge 0),
\end{equation*}
which means that $x(\argdot; \phi)$ is a solution of \eqref{eq:IVP_RFDE_mild}.
\smallskip

\textbf{Step 3: Uniqueness.}
We only have to consider the case $\phi(\theta) = 0$ ($m$-a.e.\ $\theta \in [-r, 0]$) in IVP~\eqref{eq:IVP_RFDE_mild}.
If $x$ is a mild solution to \eqref{eq:RFDE} under $x_0 = 0$, Step 1 yields
\begin{equation*}
	\abs*{x(t)} \le \norm*{L} \int_0^t \abs*{x(s)} \mspace{2mu} ds
	\quad (t \ge 0).
\end{equation*}
Since $x|_{[0, \infty)}$ is continuous, $x(t) = 0$ for $t \ge 0$ is concluded by applying Gronwall's inequality.
\smallskip

This completes the proof.
\end{proof}

\begin{remark}
Under our settings, one of the results obtained by Webb~\cite{Webb_1976} can be stated as follows.
Let $1 \le p < \infty$ be given.
For RFDE~\eqref{eq:RFDE}, let $A$ be the densely defined linear operator on $X \coloneqq L^p([-r, 0], \mathbb{C}^n) \times \mathbb{C}^n$ given by
\begin{align*}
	\mathcal{D}(A)
	&\coloneqq \Set*{(\phi, h) \in X}{\text{$\phi \in \AC([-r, 0], \mathbb{C}^n)$, $\phi' \in L^p([-r, 0], \mathbb{C}^n)$, and $h = \phi(0)$}}, \\
	A(\phi, h)
	&\coloneqq (\phi', L\phi).
\end{align*}
By showing that $A$ generates a $C_0$-semigroup $(T(t))_{t \ge 0}$, Webb~\cite{Webb_1976} defined the function $x(\phi, h)$ for $(\phi, h) \in X$ by
\begin{alignat*}{2}
	x(\phi, h)(t) &\coloneqq \phi(t) &\quad & (a.e.\ t \in [-r, 0)), \\
	x(\phi, h)(t) &\coloneqq P_2T(t)(\phi, h) & & (t \ge 0).
\end{alignat*}
Here $P_2 \colon X \to \mathbb{C}^n$ denotes the second projection.
As stated in \cite{Webb_1976}, this method is an adaptation of the one in the Banach space $C$ by Flaschka and Leitman~\cite{Flaschka--Leitman_1975} to the setting of $X$.
By this semigroup approach, the integral equation
\begin{equation*}
	x(\phi, h)(t)
	= h + \int_{-r}^0 d\eta(\theta) \mspace{2mu} \rbra*{\int_0^t x(\phi, h)(s + \theta) \mspace{2mu} ds}
	\quad (t \ge 0)
\end{equation*}
is shown to hold for all $(\phi, h) \in X$ by virtue of the density of $\mathcal{D}(A)$ in $X$.
In this context, our alternative proof of Theorem~\ref{thm:unique_existence} can be viewed as a direct counterpart to Webb's formulation, which bypasses the need for semigroup generation on the product space.
\end{remark}

For each $\phi \in M^1$, let $x(\argdot; \phi)$ be the unique solution of \eqref{eq:IVP_RFDE_mild}, whose existence is ensured by Theorem~\ref{thm:unique_existence}.
This notation is consistent with the symbol $x(\argdot; \phi)$ for $\phi \in C$ from Remark~\ref{rmk:usual_mild}.

\subsection{Mild Solution and $C_0$-Semigroup}

In this subsection, we discuss a family of linear maps obtained from the mild solutions to \eqref{eq:RFDE}.
Throughout this subsection, $1 \le p < \infty$ is assumed to be fixed.

\begin{definition}
We define the family $(S(t))_{t \ge 0}$ of linear maps on $M^p$ by
\begin{equation*}
	S(t)\phi \coloneqq x(\argdot; \phi)_t
\end{equation*}
for $(t, \phi) \in [0, \infty) \times M^p$.
Here $x(\argdot; \phi)_t \in M^p$ is defined as
\begin{equation*}
	x(\argdot; \phi)_t(\theta) \coloneqq x(t + \theta; \phi)
\end{equation*}
for $m$-a.e.\ $\theta \in [-r, 0]$.
\end{definition}

Regarding the strong continuity of $(S(t))_{t \ge 0}$, it holds that the function $[0, \infty) \ni t \mapsto x(\argdot; \phi)_t \in M^p$ is continuous because
\begin{align*}
	&\norm*{x(\argdot; \phi)_t - x(\argdot; \phi)_s}_{M^p}^p \\
	&= \int_{-r}^0 \abs*{x(t + \theta; \phi) - x(s + \theta; \phi)}^p \mspace{2mu} d\theta + \abs*{x(t; \phi) - x(s; \phi)}^p 
\end{align*}
holds for all $t, s \ge 0$.
We note that this property was used in \cite{Bernier--Manitius_1978}.
Taking this strong continuity into account, we arrive at the following result.

\begin{theorem}\label{thm:mild_dynamics}
$(S(t))_{t \ge 0}$ is a $C_0$-semigroup on $M^p$.
\end{theorem}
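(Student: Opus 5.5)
We need to verify four properties of $(S(t))_{t \ge 0}$: each $S(t)$ is a well-defined bounded linear operator on $M^p$, $S(0) = I$, the semigroup law $S(t+s) = S(t)S(s)$ holds, and the family is strongly continuous. Strong continuity was established in the preceding paragraph, and $S(0)\phi = x(\argdot;\phi)_0 = \phi$ $m$-a.e. by the initial condition in \eqref{eq:IVP_RFDE_mild}. The work therefore lies in boundedness and the semigroup property.

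\textbf{Well-definedness, linearity, and boundedness.} For $\phi \in M^p \subset M^1$, Theorem~\ref{thm:unique_existence} gives a unique solution, and $x(\argdot;\phi)_t$ is $m$-measurable. Linearity follows from uniqueness: $a x(\argdot;\phi) + b x(\argdot;\chi)$ solves \eqref{eq:IVP_RFDE_mild} with initial history $a\phi + b\chi$. For the bound, I would redo the Gronwall estimate of Step~1 in $p$-form. Write $x = x(\argdot;\phi)$. By H\"older's inequality,
\begin{equation*}
	\abs*{x(t)} \le \abs*{\phi(0)} + \norm*{L}\rbra*{r^{1 - 1/p}\norm*{\phi}_{M^p} + \int_0^t \abs*{x(s)} \mspace{2mu} ds}.
\end{equation*}
Gronwall's inequality then gives $\sup_{s \in [0,t]} \abs*{x(s)} \le K e^{\norm*{L}t}\norm*{\phi}_{M^p}$ with $K$ independent of $\phi$. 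Consequently,
\begin{equation*}
	\norm*{S(t)\phi}_{M^p}^p \le \int_{-r}^0 \abs*{x(t+\theta)}^p \mspace{2mu} d\theta + \abs*{x(t)}^p \le \norm*{\phi}_{M^p}^p + (r+1)\rbra*{K e^{\norm*{L}t}\norm*{\phi}_{M^p}}^p,
\end{equation*}
where the first bound splits the Lebesgue part over $t+\theta<0$ (controlled by $\phi$) and $t+\theta \ge 0$ (controlled by the sup of $x$). So each $S(t)$ is bounded.

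\textbf{Semigroup property.} This is the main step, because the usual translation argument must be made compatible with the retarded integral and with $m$-a.e. initial data. Fix $s \ge 0$ and put $y(t) \coloneqq x(t+s;\phi)$ for $t \ge -r$. Then $y_0 = x(\argdot;\phi)_s = S(s)\phi$ $m$-a.e.; the point $\theta = 0$ is fine because $x$ is genuinely defined on $[0,\infty)$. The mild equation for $x$ at times $t+s$ and $s$ gives
\begin{equation*}
	y(t) = x(s) + L\rbra*{\int_0^{t+s} x_\sigma \mspace{2mu} d\sigma - \int_0^{s} x_\sigma \mspace{2mu} d\sigma}.
\end{equation*}
By the pointwise definition of the retarded integral and the change of variables $\sigma \mapsto \sigma + s$, the bracket equals $\int_0^t y_\sigma \mspace{2mu} d\sigma$ pointwise in $\theta$. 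Since $x(s) = y(0)$, $y$ solves \eqref{eq:IVP_RFDE_mild} with initial history $S(s)\phi$. Uniqueness in Theorem~\ref{thm:unique_existence} then gives $y = x(\argdot; S(s)\phi)$ on $[0,\infty)$. Taking the $t$-segment yields $S(t)S(s)\phi = y_t = x(\argdot;\phi)_{t+s} = S(t+s)\phi$ $m$-a.e.

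The subtle point to check is that the segments agree $m$-a.e. also for $t+\theta < 0$, i.e.\ where the segment falls in the initial-history region. There $y_t(\theta) = y(t+\theta)$ is by definition $x(t+s+\theta;\phi)$, since $y_0 = S(s)\phi$ is literally a shift of $x(\argdot;\phi)$. Hence the identity holds off a Lebesgue-null set in $[-r,0)$ and exactly at $\theta = 0$.
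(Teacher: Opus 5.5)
Your proposal is correct and follows essentially the same route as the paper. The semigroup law comes from time-translating the mild solution, checking that the retarded integral shifts accordingly, and invoking uniqueness from Theorem~\ref{thm:unique_existence}; boundedness comes from the Gronwall bound on $\abs{x(t;\phi)}$ combined with H\"older's inequality, and strong continuity is taken from the preceding paragraph. The differences are only cosmetic: you apply H\"older directly inside the Gronwall estimate and get a factor $(r+1)$, where the paper passes through $\norm{\phi}_{M^1} \le (r+1)^{1/q}\norm{\phi}_{M^p}$ and a factor $(t+1)$. You also spell out $S(0) = I$, linearity, and the $m$-a.e.\ agreement of segments in the initial-history region, which the paper leaves implicit.
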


\begin{proof}
\textbf{Step 1: Semigroup property.}
Let $\phi \in M^p$ and $t_1, t_2 \ge 0$ be fixed.
We now show that
\begin{equation}\label{eq:semigroup_property}
	x(t_1 + t_2 + \theta; \phi) = x(t_2 + \theta; x(\argdot; \phi)_{t_1})
\end{equation}
holds for $m$-a.e.\ $\theta \in [-r, 0]$.
For this purpose, let $\psi \coloneqq x(\argdot; \phi)_{t_1}$, and let
\begin{equation*}
	y(t) \coloneqq x(t_1 + t; \phi)
\end{equation*}
for all $t \ge 0$ and $m$-a.e.\ $t \in [-r, 0]$.
Then for all $t \ge 0$, we have
\begin{align*}
	y(t)
	&= \phi(0) + L\int_0^{t_1 + t} x(\argdot; \phi)_s \mspace{2mu} ds \\
	&= \phi(0) + L\int_0^{t_1} x(\argdot; \phi)_s \mspace{2mu} ds + L\int_{t_1}^{t_1 + t} x(\argdot; \phi)_s \mspace{2mu} ds \\
	&= x(t_1; \phi) + L\int_0^t x(t_1 + \argdot; \phi)_s \mspace{2mu} ds \\
	&= \psi(0) + L\int_0^t y_s \mspace{2mu} ds.
\end{align*}
Since $y_0 = \psi$,
\begin{equation*}
	y(t) = x(t; \psi) \quad (t \ge 0)
\end{equation*}
holds from Theorem~\ref{thm:unique_existence}.
This shows that the equality~\eqref{eq:semigroup_property} holds.
Therefore, $S(t_1 + t_2)\phi = S(t_2)S(t_1)\phi$ is obtained.
\smallskip

\textbf{Step 2: Boundedness.}
Let $\phi \in M^p$ be given.
Let $q$ be the conjugate exponent of $p$.
Applying H\"older's inequality, we have
\begin{align*}
	\norm*{\phi}_{M^1}
	&= \int_{[-r, 0]} \abs*{\phi(\theta)} \mspace{2mu} m(d\theta) \\
	&\le \rbra*{\int_{[-r, 0]} m(d\theta)}^{1/q} \cdot \rbra*{\int_{[-r, 0]} \abs*{\phi(\theta)}^p \mspace{2mu} m(d\theta)}^{1/p} \\
	&= (r + 1)^{1/q} \norm*{\phi}_{M^p},
\end{align*}
where the integration with respect to the measure $m$ is denoted by $m(d\theta)$.
Therefore, the above proof of Theorem~\ref{thm:unique_existence} shows that there is an $R > 0$ such that
\begin{equation*}
	\abs*{x(t; \phi)} \le Re^{\norm*{L}t}\norm*{\phi}_{M^p}
	\quad (t \ge 0)
\end{equation*}
holds.
From this inequality, we obtain
\begin{align*}
	\norm*{S(t)\phi}_{M^p}^p
	&= \int_{[-r, 0]} \abs*{x(t + \theta; \phi)}^p \mspace{2mu} m(d\theta) \\
	&\le \int_{-r}^0 \abs*{\phi(\theta)}^p \mspace{2mu} d\theta + \int_0^t \abs*{x(s; \phi)}^p \mspace{2mu} ds + \abs*{x(t; \phi)}^p \\
	&\le \norm*{\phi}_{M^p}^p + (t + 1) \cdot \rbra[\big]{Re^{\norm*{L}t}}^p \cdot \norm*{\phi}_{M^p}^p
\end{align*}
for each $t \ge 0$, which yields the boundedness of $S(t)$.
\smallskip

This completes the proof.
\end{proof}

The next result is a consequence of Theorem~\ref{thm:mild_dynamics}.
However, it is nevertheless important from the viewpoint of density and approximation.

\begin{corollary}\label{cor:C_0-semigroup_density}
If a sequence $(\phi_j)_{j = 1}^\infty$ in $C$ converges to $\phi$ in $M^p$, then
\begin{equation*}
	\lim_{j \to \infty} \norm*{S(t)\phi - T(t)\phi_j}_{M^p} = 0
\end{equation*}
holds for all $t \ge 0$.
\end{corollary}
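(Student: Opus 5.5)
The argument combines two facts: for continuous initial data the mild semigroup and the classical one coincide, and $S(t)$ is bounded on $M^p$.

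\textbf{Step 1: agreement on $C$.} Let $\phi_j \in C$. By Remark~\ref{rmk:usual_mild}, the usual solution $x(\argdot;\phi_j)$ of \eqref{eq:IVP_RFDE} is a mild solution with $x_0 = \phi_j$. By the uniqueness part of Theorem~\ref{thm:unique_existence}, it is therefore the mild solution of \eqref{eq:IVP_RFDE_mild}. Hence $x(t+\theta;\phi_j)$ agrees for both notions and every $\theta \in [-r,0]$, so $S(t)\phi_j = T(t)\phi_j$ as elements of $M^p$. Here the continuous function $T(t)\phi_j \in C$ is regarded as an element of $M^p$ through the natural inclusion $C \hookrightarrow M^p$. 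This inclusion is well defined and injective because $m$ charges every nonempty relatively open subset of $[-r,0]$, and the atom at $0$ records the value $\phi_j(0)$.

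\textbf{Step 2: boundedness and linearity.} By Theorem~\ref{thm:mild_dynamics}, each $S(t)$ is a bounded linear operator on $M^p$. Step~2 of its proof gives an explicit bound: there is a constant $K_t$ with $\norm{S(t)\chi}_{M^p} \le K_t \norm{\chi}_{M^p}$ for all $\chi \in M^p$. Linearity of $\phi \mapsto x(\argdot;\phi)$ follows from uniqueness, as used there.

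\textbf{Step 3: conclusion.} For fixed $t \ge 0$,
\begin{equation*}
	\norm{S(t)\phi - T(t)\phi_j}_{M^p}
	= \norm{S(t)(\phi - \phi_j)}_{M^p}
	\le K_t \norm{\phi - \phi_j}_{M^p}
	\to 0 .
\end{equation*}
This proves the statement.

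The only point needing care is Step~1, namely that $T(t)\phi_j$ and $S(t)\phi_j$ coincide $m$-a.e. including the atom at $0$. This is immediate once the usual solution is known to be the mild one, so I expect no real obstacle. The corollary is essentially a restatement of the fact that $S(t)$ is the bounded extension of $T(t)$ from the dense subspace $C$ (Proposition~\ref{prop:density_C_Mp}).
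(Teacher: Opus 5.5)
Your proposal is correct and matches the paper's proof, which consists exactly of the identification $T(t)\phi_j = S(t)\phi_j$ together with the boundedness of $S(t)$ from Theorem~\ref{thm:mild_dynamics}. Your Step~1 spells out why that identification holds: usual solutions are mild solutions, and mild solutions are unique. The paper leaves this implicit.
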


\begin{proof}
This follows by the boundedness of $S(t)$ in view of $T(t)\phi_j = S(t)\phi_j$.
\end{proof}

\section{The Green and Lagrange Identities in the Mild Sense}\label{sec:Lagrange_identity_mild}

\subsection{Bounded Unique Extension of the Bilinear Form $B$}\label{subsec:bounded_extension_B}

In this paper, we introduce the following definition.

\begin{definition}
Let $m^*$ denote the sum of the Lebesgue measure on $[0, r]$ and the Dirac measure at $0$.
We call $m^*$ the \textit{adjoint} of the memory measure $m$.
For each $1 \le q \le \infty$, let $M^{*q}([0, r], (\mathbb{C}^n)\dual)$ be the Banach space given by
\begin{equation*}
	M^{*q}([0, r], (\mathbb{C}^n)\dual) \coloneqq L^q(([0, r], m^*), (\mathbb{C}^n)\dual)
\end{equation*}
endowed with the norm
\begin{equation*}
	\norm*{\argdot}_{M^{*q}} \coloneqq \norm*{\argdot}_{L^q([0, r], m^*)}.
\end{equation*}
Here $([0, r], m^*)$ denotes the measure space $[0, r]$ equipped with the measure $m^*$.
$M^{*q}([0, r], (\mathbb{C}^n)\dual)$ will be abbreviated as $M^{*q}$.
\end{definition}

We observe that the bilinear form $B$ can be expressed in terms of the memory measure $m$ and its adjoint $m^*$.
To see this, we introduce the following notation.

\begin{notation}
For each $\phi \in C$ and $\psi \in C^*$, let $f_\phi \colon [0, r] \to \mathbb{C}^n$ and $f_\psi^* \colon [-r, 0] \to (\mathbb{C}^n)\dual$ be the functions defined by
\begin{align*}
	f_\phi(\zeta)
	&\coloneqq
	\begin{cases}
		\phi(0) & (\zeta = 0), \\
		\int_{-r}^{-\zeta} d\eta(\theta) \mspace{2mu} \phi(\zeta + \theta) & (\zeta \in (0, r]),
	\end{cases} \\[1mm]
	f_\psi^*(\xi)
	&\coloneqq
	\begin{cases}
		\int_{-r}^{\xi} \psi(\xi - \theta) \mspace{2mu} d\eta(\theta) & (\xi \in [-r, 0)), \\
		\psi(0) & (\xi = 0).
	\end{cases}
\end{align*}
\end{notation}

We note that $f_\phi \in L^\infty(([0, r], m^*), \mathbb{C}^n)$ holds because the function $f_\phi$ is Lebesgue measurable and
\begin{equation*}
	\norm*{f_\phi}_{L^\infty([0, r], m^*)}
	= \max\rbra*{\abs*{\phi(0)}, \norm*{f_\phi}_{L^\infty[0, r]}}
	< \infty.
\end{equation*}
By the same reasoning, $f_\psi^* \in L^\infty(([-r, 0], m), (\mathbb{C}^n)\dual)$ also holds.
Then $B(\psi, \phi)$ for $(\psi, \phi) \in C^* \times C$ is expressed as
\begin{align*}
	B(\psi, \phi)
	&= \int_{[-r, 0]} f_\psi^*(\xi)\phi(\xi) \mspace{2mu} m(d\xi) \\
	&= \int_{[0, r]} \psi(\zeta)f_\phi(\zeta) \mspace{2mu} m^*(d\zeta).
\end{align*}

Under these settings, we obtain the following result concerning the extension of the bilinear form $B$.
This is of importance for clarifying the relationship between the mild solutions to \eqref{eq:RFDE} and the adjoint, which is a focus of this section.

\begin{theorem}\label{thm:bounded_unique_extension}
Let $1 < p < \infty$ be given and $q$ be the conjugate exponent of $p$.
Then the bilinear form $B \colon C^* \times C \to \mathbb{C}$ uniquely extends to a bounded bilinear form $\tilde{B} \colon M^{*q} \times M^p \to \mathbb{C}$.
\end{theorem}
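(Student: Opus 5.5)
The plan is to show that $B$ is bounded on $C^* \times C$ with respect to the norms $\norm{\argdot}_{M^{*q}}$ and $\norm{\argdot}_{M^p}$. Then $B$ extends by density, using Proposition~\ref{prop:density_C_Mp} together with its analogue for $C^*$ in $M^{*q}$.

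\textbf{Step 1: density.} The density argument of Proposition~\ref{prop:density_C_Mp} applies verbatim to $m^*$ on $[0,r]$, with the point $0$ now the left endpoint. Hence $C^*$ is dense in $M^{*q}$ for $1 \le q < \infty$. Since $1<p<\infty$, also $1<q<\infty$, so both $C$ and $C^*$ are dense.

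\textbf{Step 2: the key estimate.} I would prove that there is $K>0$ with
\begin{equation*}
	\abs*{B(\psi,\phi)} \le K \norm*{\psi}_{M^{*q}} \norm*{\phi}_{M^p}
	\quad ((\psi,\phi) \in C^* \times C).
\end{equation*}
The first term is easy: $\abs{\psi(0)\phi(0)} \le \norm{\psi}_{M^{*q}}\norm{\phi}_{M^p}$, since the Dirac mass at $0$ gives $\abs{\psi(0)} \le \norm{\psi}_{M^{*q}}$, and similarly for $\phi$.

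For the double integral, I return to the Fubini form from the proof of Proposition~\ref{prop:bilinear_form_B_alternative}, component by component. This gives
\begin{equation*}
	\int_{-r}^0 \rbra*{\int_\theta^0 \psi_i(\xi-\theta)\phi_j(\xi)\, d\xi}\, d\eta_{ij}(\theta).
\end{equation*}
For each fixed $\theta$, Hölder's inequality bounds the inner integral by $\norm{\psi_i}_{L^q[0,r]}\norm{\phi_j}_{L^p[-r,0]}$, and hence by $\norm{\psi}_{M^{*q}}\norm{\phi}_{M^p}$. Integrating against $\abs{d\eta_{ij}}$ then gives the bound $\Var(\eta_{ij})\norm{\psi}_{M^{*q}}\norm{\phi}_{M^p}$. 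Summing over $i,j$ yields $K = 1 + \sum_{i,j}\Var(\eta_{ij})$.

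The only delicate point is justifying the exchange of integrals. The inner integral is a continuous function of $\theta$, as in the proof of Theorem~\ref{thm:duality_finite_interval}, and the Stieltjes integral of a continuous function is bounded by its sup times the total variation. I expect this step to be the main, though mild, obstacle: one must make sure the estimate depends only on $L^p/L^q$ norms and never on sup norms.

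\textbf{Step 3: extension and uniqueness.} A bounded bilinear form on a product of dense subspaces extends uniquely to the completions, by the standard argument. Given $(\psi,\phi)$, take approximating sequences $(\psi_k,\phi_k)$ in $C^* \times C$. The identity
\begin{equation*}
	B(\psi_k,\phi_k)-B(\psi_l,\phi_l) = B(\psi_k-\psi_l,\phi_k) + B(\psi_l,\phi_k-\phi_l)
\end{equation*}
combined with the estimate of Step 2 shows that $(B(\psi_k,\phi_k))_k$ is Cauchy. The limit is independent of the chosen sequences, and the resulting form $\tilde B$ is bilinear with $\abs{\tilde B(\psi,\phi)} \le K\norm{\psi}_{M^{*q}}\norm{\phi}_{M^p}$. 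Uniqueness follows because any bounded extension is jointly continuous and therefore determined by its values on the dense set $C^* \times C$.
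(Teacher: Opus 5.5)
Your argument is correct. Its skeleton matches the paper's: a bound in the $M^{*q}$ and $M^p$ norms, then the standard extension from the dense set $C^* \times C$. You obtain the key estimate differently, though.

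The paper rewrites $B(\psi, \phi) = \int_{[0, r]} \psi(\zeta) f_\phi(\zeta) \, m^*(d\zeta)$. It then imports the known bound $\norm*{f_\phi}_{L^p[0, r]} \le \Var(\eta) \norm*{\phi}_{L^p[-r, 0]}$ for the map $\phi \mapsto f_\phi$, cited from Delfour--Manitius and earlier work of the author. A single H\"older inequality with respect to $m^*$ then finishes the proof, with constant $\max\{1, \Var(\eta)\}$.

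You instead move $d\eta_{ij}$ to the outside via the triangular Fubini identity from the proof of Proposition~\ref{prop:bilinear_form_B_alternative}. You then apply H\"older to the inner Lebesgue integral for each fixed $\theta$, which needs only $\abs*{\int g \, d\eta_{ij}} \le \sup\abs*{g} \cdot \Var(\eta_{ij})$. This is self-contained and does not rely on the cited $L^p$ estimate. The price is the cruder constant $1 + \sum_{i, j} \Var(\eta_{ij})$, which does not matter here.

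The paper's route has one advantage: it produces the operator-level statement along the way. That is the bounded extension $\phi \mapsto \tilde{f}_\phi$ (the structural operator of Remark~\ref{rmk:structural_operator}) and the representation of $\tilde{B}$ through it. Your bilinear bound recovers this too by duality, since $f_\phi \in L^\infty(([0, r], m^*), \mathbb{C}^n)$ and $C^*$ is dense in $M^{*q}$.
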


\begin{proof}
Let $(\psi, \phi) \in C^* \times C$ be given.
Since $\norm*{f_\phi}_{L^p[0, r]} \le \Var(\eta) \norm*{\phi}_{L^p[-r, 0]}$ holds with the total variation $\Var(\eta)$ of $\eta$ (see \cite{Delfour--Manitius_1980, Nishiguchi_2026}), we have
\begin{align*}
	\norm*{f_\phi}_{L^p([0, r], m^*)}^p
	&= \abs*{\phi(0)}^p + \norm*{f_\phi}_{L^p[0, r]}^p \\
	&\le \abs*{\phi(0)}^p + \Var(\eta)^p \cdot \norm*{\phi}_{L^p[-r, 0]}^p \\
	&\le \max\{1, \Var(\eta)^p\} \cdot \norm*{\phi}_{M^p}^p.
\end{align*}
Therefore, applying H\"older's inequality yields
\begin{align*}
	\abs*{B(\psi, \phi)}
	&\le \int_{[0, r]} \abs*{\psi(\zeta)} \abs*{f_\phi(\zeta)} \mspace{2mu} m^*(d\zeta) \\
	&\le \norm*{\psi}_{M^{*q}} \norm*{f_\phi}_{L^p([0, r], m^*)} \\
	&\le \max\{1, \Var(\eta)^p\}^{1/p} \cdot \norm*{\psi}_{M^{*q}}\norm*{\phi}_{M^p}.
\end{align*}
This shows that the bilinear form $B \colon C^* \times C \to \mathbb{C}$ is bounded with respect to the norms $\norm*{\argdot}_{M^{*q}}$ and $\norm*{\argdot}_{M^p}$.
Since the subset $C^* \times C$ is dense in $M^{*q} \times M^p$, by an argument similar to the unique extension of a bounded linear operator defined on a dense subspace of a normed space to a Banach space, it holds that the bilinear form $B$ uniquely extends to a bounded bilinear form $\tilde{B} \colon M^{*q} \times M^p \to \mathbb{C}$.
\end{proof}

\subsubsection{Remarks on the Unique Bounded Extension}

\begin{remark}\label{rmk:structural_operator}
From the proof of Theorem~\ref{thm:bounded_unique_extension}, the bounded linear operator $C \ni \phi \mapsto f_\phi \in L^p(([0, r], m^*), \mathbb{C}^n)$ uniquely extends to a bounded linear operator
\begin{equation*}
	M^p \ni \phi \mapsto \tilde{f}_\phi \in L^p(([0, r], m^*), \mathbb{C}^n)
\end{equation*}
with
\begin{equation*}
	\norm[\big]{\tilde{f}_\phi}_{L^p([0, r], m^*)}
	\le \max\{1, \Var(\eta)^p\}^{1/p} \cdot \norm*{\phi}_{M^p}
	\quad (\phi \in M^p)
\end{equation*}
by density argument.
This bounded linear operator is related to the so-called \textit{structural operator} introduced in \cite{Bernier--Manitius_1978, Delfour--Manitius_1980} (see Remark~\ref{rmk:Delfour_Manitius} given below).
By using this function $\tilde{f}_\phi$, $\tilde{B}(\psi, \phi)$ is expressed as
\begin{equation*}
	\tilde{B}(\psi, \phi)
	= \int_{[0, r]} \psi(\zeta)\tilde{f}_\phi(\zeta) \mspace{2mu} m^*(d\zeta)
\end{equation*}
for $(\psi, \phi) \in M^{*q} \times M^p$.
\end{remark}

\begin{remark}
By the same reasoning as in Remark~\ref{rmk:structural_operator}, the bounded linear operator $C^* \ni \psi \mapsto f_\psi^* \in L^q(([-r, 0], m), (\mathbb{C}^n)\dual)$ uniquely extends to a bounded linear operator
\begin{equation*}
	M^{*q} \ni \psi \mapsto \tilde{f}_\psi^* \in L^q(([-r, 0], m), (\mathbb{C}^n)\dual)
\end{equation*}
with
\begin{equation*}
	\norm[\big]{\tilde{f}_\psi^*}_{L^q([-r, 0], m)}
	\le \max\{1, \Var(\eta)^q\}^{1/q} \cdot \norm*{\psi}_{M^{*q}}
	\quad (\psi \in M^{*q}).
\end{equation*}
Then $\tilde{B}(\psi, \phi)$ is also expressed as
\begin{equation*}
	\tilde{B}(\psi, \phi)
	= \int_{[-r, 0]} \tilde{f}_\psi^*(\xi)\phi(\xi) \mspace{2mu} m(d\xi)
\end{equation*}
for $(\psi, \phi) \in M^{*q} \times M^p$.
\end{remark}

\begin{remark}\label{rmk:Delfour_Manitius}
Under our framework, the consideration by Delfour and Manitius~\cite{Delfour--Manitius_1980} can be stated as follows.
By showing the boundedness of the linear operator $C \ni \phi \mapsto f_\phi \in L^2([0, r], \mathbb{C}^n)$ with respect to $\norm*{\argdot}_{L^2[-r, 0]}$, they uniquely extend this to the following bounded linear operator
\begin{equation*}
	H \colon L^2([-r, 0], \mathbb{C}^n) \to L^2([0, r], \mathbb{C}^n)
\end{equation*}
and define a bounded linear operator $F \colon L^2([-r, 0], \mathbb{C}^n) \times \mathbb{C}^n \to L^2([0, r], \mathbb{C}^n) \times \mathbb{C}^n$ by
\begin{equation*}
	F\phi \coloneqq \rbra*{H\phi^1, \phi^0}
\end{equation*}
for $\phi = \rbra*{\phi^1, \phi^0} \in L^2([-r, 0], \mathbb{C}^n) \times \mathbb{C}^n$.
Despite the difference in the codomain of $F$, this is precisely the \textit{structural operator} defined in \cite{Delfour--Manitius_1980}.
Delfour and Manitius~\cite{Delfour--Manitius_1980} further defined a bilinear form $\langle \argdot, \argdot \rangle$ on $\rbra*{L^2([-r, 0], \mathbb{C}^n) \times \mathbb{C}^n} \times \rbra*{L^2([-r, 0], \mathbb{C}^n) \times \mathbb{C}^n}$ by
\begin{equation*}
	\langle \psi, \phi \rangle
	\coloneqq \psi^0\phi^0 + \int_{-r}^0 \psi^1(\theta)(H\phi^1)(-\theta) \mspace{2mu} d\theta
\end{equation*}
for $\psi = \rbra*{\psi^1, \psi^0} \in L^2([-r, 0], \mathbb{C}^n) \times \mathbb{C}^n$ and $\phi = \rbra*{\phi^1, \phi^0} \in L^2([-r, 0], \mathbb{C}^n) \times \mathbb{C}^n$, and mentioned that $\langle \psi, \phi \rangle$ is expressed as
\begin{equation*}
	\langle \psi, \phi \rangle
	= \psi(0)\phi(0) + \int_{-r}^0 d\xi \int_{\theta}^0 \psi(\theta - \xi) \cdot d\eta(\theta) \phi(\xi)
\end{equation*}
for $\phi, \psi \in C$.

In this paper, we derive the bilinear form $B \colon C^* \times C \to \mathbb{C}$ from the Green and Lagrange identities for the RFDE~\eqref{eq:RFDE} and uniquely extends this to the bounded bilinear form $\tilde{B} \colon M^{*q} \times M^p \to \mathbb{C}$.
In this respect, our approach is crucially distinct from that of \cite{Delfour--Manitius_1980}.
\end{remark}

\subsection{Mild Solution and Adjoint Semigroup}\label{subsec:mild_sol_adjoint_semigroup}

In this and the subsequent subsection, we fix $1 < p < \infty$ and its conjugate exponent $q$.
Let
\begin{equation*}
	\tilde{B} \colon M^{*q} \times M^p \to \mathbb{C}
\end{equation*}
be the bounded bilinear form whose existence is guaranteed by Theorem~\ref{thm:bounded_unique_extension}.
From Corollary~\ref{cor:C_0-semigroup_density} and Theorem~\ref{thm:bounded_unique_extension}, if $((\psi_j, \phi_j))_{j = 1}^\infty$ is a sequence in $C^* \times C$ converging to $(\psi, \phi)$ in $M^{*q} \times M^p$, then
\begin{equation*}
	\tilde{B}(\psi, S(t)\phi)
	= \lim_{j \to \infty} B(\psi_j, T(t)\phi_j)
\end{equation*}
holds for all $t \ge 0$.
Combining the above equality and Theorem~\ref{thm:adjoint_semigroup} motivates us to introduce the notion of mild solutions to the adjoint equation~\eqref{eq:adjoint_RFDE}, together with the $C_0$-semigroup generated by the corresponding IVP.

In the following, let $\mathcal{L}^1_\mathrm{loc}((-\infty, r], (\mathbb{C}^n)\dual)$ denote the set of all $(\mathbb{C}^n)\dual$-valued locally integrable functions on $(-\infty, r]$.

\begin{definition}
We say that $y \in \mathcal{L}^1_\mathrm{loc}((-\infty, r], (\mathbb{C}^n)\dual)$ is a \textit{mild solution} to \eqref{eq:adjoint_RFDE} if (i) $y$ is defined on $(-\infty, 0]$, and (ii)
\begin{equation*}
	y(t) = y(0) - L^* \int_0^t y^s \mspace{2mu} ds
\end{equation*}
holds for all $t \le 0$.
Here $\int_0^t y^s \mspace{2mu} ds \in C^*$ for $t \le 0$ is defined as
\begin{equation*}
	\rbra*{\int_0^t y^s \mspace{2mu} ds}(\tau)
	= \int_0^t y(s + \tau) \mspace{2mu} ds
	\quad (\tau \in [0, r]),
\end{equation*}
which corresponds to the time-reversed counterpart of the retarded integral.
\end{definition}

Mild solutions to \eqref{eq:adjoint_RFDE} can also be formulated as an IVP by imposing an initial condition of the form
\begin{equation*}
	y(\tau) = \psi(\tau) \quad (\text{$m^*$-a.e.\ $\tau \in [0, r]$}),
\end{equation*}
where $\psi \in M^{*1}$.
We write the above initial condition as $y^0 = \psi$.
Then the IVP is expressed as
\begin{equation}\label{eq:IVP_adjoint_RFDE_mild}
	\left\{
	\begin{aligned}
		y(t) &= \psi(0) - L^* \int_0^t y^s \mspace{2mu} ds & (t \le 0), \\
		y^0 &= \psi
	\end{aligned}
	\right.
\end{equation}
where $\psi \in M^{*1}$.

\begin{remark}
The existence of a unique solution to IVP~\eqref{eq:IVP_adjoint_RFDE_mild} can be established by reducing it to the IVP for $z(t) = y(-t)^T$ $(t \in [-r, \infty))$.
\end{remark}

\begin{definition}
For each $\psi \in M^{*1}$, let $y(\argdot; \psi)$ be the unique solution of \eqref{eq:IVP_adjoint_RFDE_mild}.
We define the family $(S^*(t))_{t \ge 0}$ of linear maps on $M^{*q}$ by
\begin{equation*}
	S^*(t)\psi \coloneqq y(\argdot; \psi)^{-t}
\end{equation*}
for $(t, \psi) \in [0, \infty) \times M^{*q}$.
\end{definition}

Then $(S^*(t))_{t \ge 0}$ constitutes a $C_0$-semigroup.
In the same manner as for Corollary~\ref{cor:C_0-semigroup_density}, if a sequence $(\psi_j)_{j = 1}^\infty$ in $C^*$ converges to $\psi$ in $M^{*q}$, then
\begin{equation*}
	\lim_{j \to \infty} \norm*{S^*(t)\psi - T^*(t)\psi_j}_{M^{*q}} = 0
\end{equation*}
holds for all $t \ge 0$.

Under the above formulation, we obtain the following theorem, which establishes that the $C_0$-semigroup $(S^*(t))_{t \ge 0}$ is ``adjoint" to $(S(t))_{t \ge 0}$ with respect to the extended bilinear form $\tilde{B}$.

\begin{theorem}\label{thm:adjoint_semigroup_mild}
For every $(\psi, \phi) \in M^{*q} \times M^p$,
\begin{equation*}
	\tilde{B}(\psi, S(t)\phi) = \tilde{B}(S^*(t)\psi, \phi) \quad (t \ge 0)
\end{equation*}
holds.
\end{theorem}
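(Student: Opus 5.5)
The plan is a density argument that transfers Theorem~\ref{thm:adjoint_semigroup} from $C^* \times C$ to $M^{*q} \times M^p$. Fix $t \ge 0$ and $(\psi, \phi) \in M^{*q} \times M^p$. By Proposition~\ref{prop:density_C_Mp}, and its analogue for $C^*$ in $M^{*q}$ (proved identically with $m^*$ in place of $m$, using continuous functions matching the value at $0$), choose sequences $(\phi_j)$ in $C$ and $(\psi_j)$ in $C^*$ with $\phi_j \to \phi$ in $M^p$ and $\psi_j \to \psi$ in $M^{*q}$. For each $j$, Theorem~\ref{thm:adjoint_semigroup} gives
\begin{equation*}
	B(\psi_j, T(t)\phi_j) = B(T^*(t)\psi_j, \phi_j).
\end{equation*}
Since $\tilde{B}$ extends $B$ and $T(t)\phi_j \in C$, $T^*(t)\psi_j \in C^*$, this equality can be rewritten as $\tilde{B}(\psi_j, T(t)\phi_j) = \tilde{B}(T^*(t)\psi_j, \phi_j)$.

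Next I pass to the limit on both sides. By Corollary~\ref{cor:C_0-semigroup_density}, $T(t)\phi_j = S(t)\phi_j \to S(t)\phi$ in $M^p$. By the analogous statement for the adjoint semigroup recorded just before the theorem, $T^*(t)\psi_j \to S^*(t)\psi$ in $M^{*q}$. The bilinear form $\tilde{B}$ is bounded, with $\abs{\tilde{B}(\chi, \varphi)} \le K\norm{\chi}_{M^{*q}}\norm{\varphi}_{M^p}$, so it is jointly continuous. This follows from the standard splitting
\begin{equation*}
	\tilde{B}(\chi_j, \varphi_j) - \tilde{B}(\chi, \varphi) = \tilde{B}(\chi_j - \chi, \varphi_j) + \tilde{B}(\chi, \varphi_j - \varphi),
\end{equation*}
combined with the boundedness of convergent sequences. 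Hence the left side tends to $\tilde{B}(\psi, S(t)\phi)$ and the right side tends to $\tilde{B}(S^*(t)\psi, \phi)$, which proves the claim.

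The main point requiring care is the convergence $T^*(t)\psi_j \to S^*(t)\psi$ in $M^{*q}$, since the paper only asserts it by analogy. If needed, I would justify it through the reduction $z(t) = y(-t)^T$. This reduction turns \eqref{eq:IVP_adjoint_RFDE_mild} into an IVP of the form \eqref{eq:IVP_RFDE_mild} with kernel $\eta(\theta)^T$. Under the reflection $\tau \mapsto -\tau$ and transposition, the measure $m^*$ corresponds to $m$ and the space $M^{*q}$ corresponds isometrically to $M^q$. The boundedness of $S^*(t)$ then follows from Step~2 of the proof of Theorem~\ref{thm:mild_dynamics} applied to the transposed equation. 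Continuous initial data produce classical solutions, so $S^*(t)\psi_j = T^*(t)\psi_j$, and the convergence follows from the boundedness of $S^*(t)$. It also uses that $1 < q < \infty$, so the density of $C^*$ in $M^{*q}$ is available.
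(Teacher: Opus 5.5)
Your proposal is correct and follows the paper's proof essentially verbatim. Like the paper, you approximate $(\psi,\phi)$ by $(\psi_j,\phi_j)\in C^*\times C$, apply Theorem~\ref{thm:adjoint_semigroup}, and pass to the limit using Corollary~\ref{cor:C_0-semigroup_density}, its adjoint analogue, and the boundedness of $\tilde{B}$. Your extra justification of $T^*(t)\psi_j \to S^*(t)\psi$ through the reflection--transposition $z(t)=y(-t)^T$, which identifies $M^{*q}$ isometrically with $M^q$, fills in a step that the paper only asserts by analogy.
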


\begin{proof}
Let $(\psi, \phi) \in M^{*q} \times M^p$ be given.
Since $C^* \times C$ is dense in $M^{*q} \times M^p$, there is a sequence $((\psi_j, \phi_j))_{j = 1}^\infty$ in $C^* \times C$ converging to $(\psi, \phi)$ in $M^{*q} \times M^p$.
Then
\begin{align*}
	\tilde{B}(\psi, S(t)\phi)
	&= \lim_{j \to \infty} B(\psi_j, T(t)\phi_j) \\
	&= \lim_{j \to \infty} B(T^*(t)\psi_j, \phi_j) \\
	&= \tilde{B}(S^*(t)\psi, \phi)
\end{align*}
holds.
\end{proof}

\subsection{The Lagrange Identity in the Mild Sense}

Theorem~\ref{thm:adjoint_semigroup_mild} strongly suggests that Theorems~\ref{thm:duality_finite_interval} and \ref{thm:Green_identity} hold true in the mild sense by using the extended bilinear form $\tilde{B}$.
The purpose of this subsection is to obtain this extension.

Let $[\alpha, \beta]$ be a closed and bounded interval of $\mathbb{R}$.
In this subsection, we use the following notation.

\begin{notation}
Let $u \in \mathcal{L}^p([\alpha - r, \beta], \mathbb{C}^n)$ and $v \in \mathcal{L}^q([\alpha, \beta + r], (\mathbb{C}^n)\dual)$ be defined on $[\alpha, \beta]$.
\begin{itemize}
\item For each $t \in [\alpha, \beta]$, let $u_t \in M^p$ and $v^t \in M^{*q}$ be defined as
\begin{align*}
	u_t(\theta) &= u(t + \theta) \quad (\text{$m$-a.e.\ $\theta \in [-r, 0]$}), \\
	v^t(\tau) &= v(t + \tau) \quad (\text{$m^*$-a.e.\ $\tau \in [0, r]$}).
\end{align*}
\item For each subinterval $[a, b] \subset [\alpha, \beta]$, let $\int_a^b u_t \mspace{2mu} dt \in C$ and $\int_a^b v^t \mspace{2mu} dt \in C^*$ be defined as
\begin{align*}
	\rbra*{\int_a^b u_t \mspace{2mu} dt}(\theta) &\coloneqq \int_a^b u(t + \theta) \mspace{2mu} dt \quad (\theta \in [-r, 0]), \\
	\rbra*{\int_a^b v^t \mspace{2mu} dt}(\tau) &\coloneqq \int_a^b v(t + \tau) \mspace{2mu} dt \quad (\tau \in [0, r]).
\end{align*}
\end{itemize}
\end{notation}

We first extend the Lagrange identity for the RFDE~\eqref{eq:RFDE} to the mild sense.

\begin{theorem}\label{thm:Lagrange_identity_mild}
Let $u \in \mathcal{L}^p([\alpha - r, \beta], \mathbb{C}^n)$ and $v \in \mathcal{L}^q([\alpha, \beta + r], (\mathbb{C}^n)\dual)$ be given.
If the restrictions $u|_{[\alpha, \beta]}$ and $v|_{[\alpha, \beta]}$ are absolutely continuous, then the function
\begin{equation*}
	[\alpha, \beta] \ni t \mapsto \tilde{B}\rbra*{v^t, u_t} \in \mathbb{C}
\end{equation*}
is absolutely continuous and satisfies
\begin{align*}
	&\frac{d}{dt} \mspace{2mu} \tilde{B}\rbra*{v^t, u_t} \\
	&= \sbra*{\frac{d}{dt} \mspace{2mu} \rbra*{v(t) + L^* \int_\alpha^t v^s \mspace{2mu} ds}} u(t) + v(t) \sbra*{\frac{d}{dt} \mspace{2mu} \rbra*{u(t) - L \int_\alpha^t u_s \mspace{2mu} ds}}
\end{align*}
for a.e.\ $t \in [\alpha, \beta]$.
\end{theorem}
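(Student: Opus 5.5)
The plan is to establish an integrated (Green-type) form of the identity first and then differentiate. Concretely, for every $[a,b]\subset[\alpha,\beta]$ I aim to prove
\begin{align*}
	\sbra*{\tilde{B}\rbra*{v^t, u_t}}_{t = a}^b
	&= \sbra*{v(t)u(t)}_{t=a}^b + \int_a^b \rbra*{\int_{-r}^0 v(t-\theta)\,d\eta(\theta)}u(t)\,dt \\
	&\mspace{30mu} - \int_a^b v(t)\rbra*{\int_{-r}^0 d\eta(\theta)\,u(t+\theta)}dt .
\end{align*}
The two Stieltjes integrals on the right must be read in a generalized sense, since $u,v$ are only $L^p$/$L^q$ outside $[\alpha,\beta]$. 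I will rewrite them through the retarded integrals $L\int_\alpha^t u_s\,ds$ and $L^*\int_\alpha^t v^s\,ds$, which are continuous in $t$ (as in the remark on retarded integrals). The precise form I want is
\begin{equation*}
	\sbra*{\tilde{B}\rbra*{v^t, u_t}}_{t=a}^b = \sbra*{v(t)u(t)}_{t=a}^b + \int_a^b \rbra*{dV(t)\,u(t) - v(t)\,dU(t)},
\end{equation*}
where $U(t) := L\int_\alpha^t u_s\,ds$ and $V(t) := -L^*\int_\alpha^t v^s\,ds$ are absolutely continuous. The stated derivative formula then follows by expanding $\frac{d}{dt}(v u)=\dot v u + v\dot u$ and regrouping into the brackets of the theorem.

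\textbf{Step 1 (continuous case).} When $u,v$ are continuous on the whole interval and $C^1$ on $[\alpha,\beta]$, this is exactly Theorem~\ref{thm:Green_identity}, combined with \eqref{eq:retarded_Riemann}, which gives $\frac{d}{dt}L\int_\alpha^t u_s\,ds = Lu_t$ and the analogue for $L^*$. To reach the absolutely continuous case with continuous data, I approximate $\dot u,\dot v$ in $L^1$ by continuous functions, keeping the boundary values fixed. Both sides of the integrated identity are continuous in the $\AC$ norm: $B$ is bounded on $C^*\times C$, and $\int_a^b|\dot v u|$ is controlled by $\|\dot v\|_{L^1}\|u\|_C$.

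\textbf{Step 2 (density).} For general $u\in\mathcal L^p$ and $v\in\mathcal L^q$, I approximate $u|_{[\alpha-r,\alpha]}$ in $M^p$ (with the value at $\alpha$ fixed) by continuous functions glued to the given absolutely continuous piece on $[\alpha,\beta]$, using Proposition~\ref{prop:density_C_Mp}; I treat $v$ on $[\beta,\beta+r]$ the same way. I then pass to the limit in each term.
\begin{itemize}
\item $\tilde B(v^t_j,u_{j,t})\to\tilde B(v^t,u_t)$ for each $t$, since $u_{j,t}\to u_t$ in $M^p$ and $v_j^t\to v^t$ in $M^{*q}$, uniformly in $t$, and $\tilde{B}$ is bounded by Theorem~\ref{thm:bounded_unique_extension}.
\item $U_j\to U$ and $V_j\to V$ in $\AC([\alpha,\beta])$. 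This follows because $\frac{d}{dt}L\int_\alpha^t u_s\,ds$ depends continuously on the $L^1$ data. Writing $L\int_\alpha^t u_s\,ds$ componentwise as a Stieltjes integral and using Fubini, its derivative is $\int_{-r}^0 d\eta(\theta)\,u(t+\theta)$ in the a.e.\ sense, with $L^1([\alpha,\beta])$ norm bounded by $\Var(\eta)\|u\|_{L^1[\alpha-r,\beta]}$.
\item The terms $v(t)u(t)$ are unchanged, since the approximation does not alter $u,v$ on $[\alpha,\beta]$.
\end{itemize}
This gives the integrated identity in the mild setting. Absolute continuity of $t\mapsto\tilde B(v^t,u_t)$ follows immediately, and Lebesgue differentiation gives the formula a.e.

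\textbf{Main obstacle.} I expect the hardest step to be justifying that $U$ and $V$ are absolutely continuous with the claimed a.e.\ derivative. This requires a Fubini argument for the Lebesgue--Stieltjes measure $d\eta$ against Lebesgue measure, applied to merely integrable $u$. I would first pass to the Lebesgue--Stieltjes framework, as in the remark following Theorem~\ref{thm:duality_finite_interval}. There I would show that $\theta\mapsto\int_\alpha^t u(s+\theta)\,ds$ can be exchanged, giving $L\int_\alpha^t u_s\,ds=\int_\alpha^t g(s)\,ds$ with $g(s)=\int_{[-r,0]} d\eta(\theta)\,u(s+\theta)$ defined for a.e.\ $s$ and belonging to $L^1$. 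That single estimate also supplies the convergence needed in the density step.
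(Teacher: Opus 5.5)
Your argument is correct up to a sign slip, but it is organized differently from the paper's proof.

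\textbf{The paper's route.} The paper integrates the $C^1$ Green identity by parts in $t$ to obtain \eqref{eq:duality_bilinear_form_retarded}. In that identity only $\dot u$, $\dot v$ and the retarded integrals $L\int_\alpha^t u_s\,ds$, $L^*\int_\alpha^t v^s\,ds$ themselves appear, never their derivatives. It then approximates $u,v$ by functions that are $C^1$ on $[\alpha,\beta]$: in $\AC$-norm there, and in $L^p$ (resp.\ $L^q$) outside. So the limiting step needs only the elementary sup-norm bound of a retarded integral by an $L^1$-norm. Finally it imports the absolute continuity of $t\mapsto L\int_\alpha^t u_s\,ds$ from \cite{Nishiguchi_2026} and applies the product rule.

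\textbf{Your route.} You keep the derivative-free form $[\tilde B(v^t,u_t)]_a^b=[vu]_a^b+\int_a^b(V'u-vU')\,dt$. For continuous $u,v$ this already follows from Theorem~\ref{thm:duality_finite_interval}, so the $\AC$ approximation in your Step~1 is unnecessary. You modify only the history of $u$ on $[\alpha-r,\alpha)$ and the future of $v$ on $(\beta,\beta+r]$. You then prove by Lebesgue--Stieltjes Fubini that $U'(t)=\int_{[-r,0]}d\eta(\theta)\,u(t+\theta)$ for a.e.\ $t$, with $\|U'\|_{L^1[\alpha,\beta]}\le\Var(\eta)\|u\|_{L^1[\alpha-r,\beta]}$. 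This gives $\AC$-convergence of the approximants $U_j$.

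\textbf{What each buys.} Your approach proves inline the absolute continuity the paper cites. It identifies the a.e.\ derivative explicitly, so the mild identity is literally the classical one with $Lu_t$ read a.e. It also leaves $u,v$ untouched on $[\alpha,\beta]$, so the boundary terms and the weights $u$, $v$ in the integrals never move. The paper's approach keeps the limiting step within the sup-norm estimate for retarded integrals, at the price of citing the absolute continuity.

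\textbf{Two small repairs.}
\begin{itemize}
\item With your convention you need $V(t)=+L^*\int_\alpha^t v^s\,ds$. Your minus sign contradicts your own first display and the bracket $v+L^*\int_\alpha^t v^s\,ds$ in the statement.
\item Proposition~\ref{prop:density_C_Mp} does not by itself pin the value at $\alpha$ (resp.\ $\beta$), which your gluing needs. Add a narrow continuous bump correcting $\chi(0)$ to $\phi(0)$. This costs little, since $|\chi(0)-\phi(0)|\le\|\chi-\phi\|_{M^p}$.
\end{itemize}
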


\begin{proof}
\textbf{Step 1.}
In this step, we assume that $u$ and $v$ are continuous functions that are continuously differentiable on $[\alpha, \beta]$.
We show that
\begin{align}
	&\sbra*{\tilde{B}\rbra*{v^t, u_t}}_{t = \alpha}^\beta \notag \\
	&= \sbra*{v(t)u(t)}_{t = \alpha}^\beta + \rbra*{L^*\int_\alpha^\beta v^t \mspace{2mu} dt}u(\beta) - \int_\alpha^\beta \rbra*{L^*\int_\alpha^t v^s \mspace{2mu} ds}\dot{u}(t) \mspace{2mu} dt \notag \\
	&\mspace{30mu} - v(\beta)\rbra*{L\int_\alpha^\beta u_t \mspace{2mu} dt} + \int_\alpha^\beta \dot{v}(t)\rbra*{L\int_\alpha^t u_s \mspace{2mu} ds} \mspace{2mu} dt \label{eq:duality_bilinear_form_retarded}
\end{align}
holds.
From Theorem~\ref{thm:Green_identity}, we have
\begin{align*}
	\sbra*{\tilde{B}\rbra*{v^t, u_t}}_{t = \alpha}^\beta
	&= \sbra*{B\rbra*{v^t, u_t}}_{t = \alpha}^\beta \\
	&= \sbra*{v(t)u(t)}_{t = \alpha}^\beta + \int_\alpha^\beta \rbra*{L^*v^t}u(t) \mspace{2mu} dt - \int_\alpha^\beta v(t)(Lu_t) \mspace{2mu} dt.
\end{align*}
Applying the integration by parts formula yields
\begin{align*}
	\int_\alpha^\beta \rbra*{L^*v^t}u(t) \mspace{2mu} dt
	&= \sbra*{\rbra*{\int_\alpha^t L^*v^s \mspace{2mu} ds}u(t)}_{t = \alpha}^\beta - \int_\alpha^\beta \rbra*{\int_\alpha^t L^*v^s \mspace{2mu} ds}\dot{u}(t) \mspace{2mu} dt \\
	&= \rbra*{\int_\alpha^\beta L^*v^t \mspace{2mu} dt}u(\beta) - \int_\alpha^\beta \rbra*{\int_\alpha^t L^*v^s \mspace{2mu} ds}\dot{u}(t) \mspace{2mu} dt.
\end{align*}
In a similar way, we also have
\begin{equation*}
	\int_\alpha^\beta v(t)(Lu_t) \mspace{2mu} dt
	= v(\beta)\rbra*{\int_\alpha^\beta Lu_t \mspace{2mu} dt} - \int_\alpha^\beta \dot{v}(t)\rbra*{\int_\alpha^t Lu_s \mspace{2mu} ds} \mspace{2mu} dt.
\end{equation*}
Therefore, \eqref{eq:duality_bilinear_form_retarded} is obtained because
\begin{equation*}
	\int_\alpha^t L^*v^s \mspace{2mu} ds = L^*\int_\alpha^t v^s \mspace{2mu} ds, \quad
	\int_\alpha^t Lu_s \mspace{2mu} ds = L\int_\alpha^t u_s \mspace{2mu} ds
\end{equation*}
hold.
\smallskip

\textbf{Step 2.}
In the next step, we will show that \eqref{eq:duality_bilinear_form_retarded} holds for given
\begin{equation*}
	u \in \mathcal{L}^p([\alpha - r, \beta], \mathbb{C}^n)
	\mspace{10mu} \text{and} \mspace{10mu}
	v \in \mathcal{L}^q([\alpha, \beta + r], (\mathbb{C}^n)\dual)
\end{equation*}
that are absolutely continuous on $[\alpha, \beta]$.
For this purpose, we obtain a preliminary equality for the subsequent arguments.

Let $C^1([\alpha, \beta], \mathbb{C}^n)$ be the set of all continuously differentiable functions from $[\alpha, \beta]$ to $\mathbb{C}^n$.
Let $\mu$ be the sum of the Lebesgue measure on $[\alpha - r, \alpha]$ and the Dirac measure at $\alpha$.
Since $C([\alpha - r, \alpha], \mathbb{C}^n)$ and $C^1([\alpha, \beta], \mathbb{C}^n)$ are dense in $L^p(([\alpha - r, \alpha], \mu), \mathbb{C}^n)$ and $\AC([\alpha, \beta], \mathbb{C}^n)$, respectively (see Proposition~\ref{prop:density_C_Mp}), there is a sequence $(u_j)_{j = 1}^\infty$ in $C([\alpha - r, \beta], \mathbb{C}^n)$ such that $u_j$ is continuously differentiable on $[\alpha, \beta]$, and
\begin{gather*}
	\int_{\alpha - r}^\alpha \abs*{u(t) - u_j(t)}^p \mspace{2mu} dt \to 0, \\
	\sup_{t \in [\alpha, \beta]} \abs*{u(t) - u_j(t)} + \int_\alpha^\beta \abs*{\dot{u}(t) - \dot{u}_j(t)} \mspace{2mu} dt \to 0
\end{gather*}
as $j \to \infty$.
In a similar way, there is a sequence $(v_j)_{j = 1}^\infty$ in $C([\alpha, \beta + r], (\mathbb{C}^n)\dual)$ such that $v_j$ is continuously differentiable on $[\alpha, \beta]$, and
\begin{gather*}
	\int_\beta^{\beta + r} \abs*{v(t) - v_j(t)}^q \mspace{2mu} dt \to 0, \\
	\sup_{t \in [\alpha, \beta]} \abs*{v(t) - v_j(t)} + \int_\alpha^\beta \abs*{\dot{v}(t) - \dot{v}_j(t)} \mspace{2mu} dt \to 0
\end{gather*}
as $j \to \infty$.
Then
\begin{align}
	&\sbra*{\tilde{B}\rbra*{v_j^t, (u_j)_t}}_{t = \alpha}^\beta \notag \\
	&= \sbra*{v_j(t)u_j(t)}_{t = \alpha}^\beta + \rbra*{L^*\int_\alpha^\beta v_j^t \mspace{2mu} dt}u_j(\beta) - \int_\alpha^\beta \rbra*{L^*\int_\alpha^t v_j^s \mspace{2mu} ds}\dot{u}_j(t) \mspace{2mu} dt \notag \\
	&\mspace{30mu} - v_j(\beta)\rbra*{L\int_\alpha^\beta (u_j)_t \mspace{2mu} dt} + \int_\alpha^\beta \dot{v}_j(t)\rbra*{L\int_\alpha^t (u_j)_s \mspace{2mu} ds} \mspace{2mu} dt \label{eq:duality_bilinear_form_retarded_j}
\end{align}
holds for every positive integer $j$ from Step 1.
\smallskip

\textbf{Step 3.}
Let $t \in [\alpha, \beta]$ be fixed.
Since
\begin{align*}
	&\norm*{u_t - (u_j)_t}_{M^p}^p \\
	&= \int_{[-r, 0]} \abs*{u(t + \theta) - u_j(t + \theta)}^p \mspace{2mu} m(d\theta) \\
	&\le \int_{\alpha - r}^\alpha \abs*{u(s) - u_j(s)}^p \mspace{2mu} ds + \int_\alpha^\beta \abs*{u(s) - u_j(s)}^p \mspace{2mu} ds + \sup_{s \in [\alpha, \beta]} \abs*{u(s) - u_j(s)}^p,
\end{align*}
we have $\lim_{j \to \infty} (u_j)_t = u_t$ in $M^p$.
In a similar way, we also have $\lim_{j \to \infty} v_j^t = v^t$ in $M^{*q}$.
These relations imply that
\begin{equation*}
	\sbra*{\tilde{B}\rbra*{v^t, u_t}}_{t = \alpha}^\beta
	= \lim_{j \to \infty} \sbra*{\tilde{B}\rbra*{v_j^t, (u_j)_t}}_{t = \alpha}^\beta
\end{equation*}
holds.
Furthermore, since
\begin{align*}
	&\norm*{\int_\alpha^t u_s \mspace{2mu} ds - \int_\alpha^t (u_j)_s \mspace{2mu} ds}_C \\
	&\le \sup_{\theta \in [-r, 0]} \int_\alpha^t \abs*{u(s + \theta) - u_j(s + \theta)} \mspace{2mu} ds \\
	&\le \int_{\alpha - r}^\alpha \abs*{u(s) - u_j(s)} \mspace{2mu} ds + \int_\alpha^t \abs*{u(s) - u_j(s)} \mspace{2mu} ds \\
	&\le r^{1/q} \cdot \rbra*{\int_{\alpha - r}^\alpha \abs*{u(s) - u_j(s)}^p \mspace{2mu} ds}^{1/p} + \int_\alpha^t \abs*{u(s) - u_j(s)} \mspace{2mu} ds,
\end{align*}
we have
\begin{equation*}
	\lim_{j \to \infty} L\int_\alpha^t (u_j)_s \mspace{2mu} ds = L\int_\alpha^t u_s \mspace{2mu} ds
\end{equation*}
by the continuity of $L$.
In a similar way, we also have
\begin{equation*}
	\lim_{j \to \infty} L^*\int_\alpha^t v_j^s \mspace{2mu} ds = L^*\int_\alpha^t v^s \mspace{2mu} ds.
\end{equation*}
Therefore, it holds that the right-hand side of \eqref{eq:duality_bilinear_form_retarded_j} converges to that of \eqref{eq:duality_bilinear_form_retarded} as $j \to \infty$.
Thus, we obtain \eqref{eq:duality_bilinear_form_retarded} for $u \in \mathcal{L}^p([\alpha - r, \beta], \mathbb{C}^n)$ and $v \in \mathcal{L}^q([\alpha, \beta + r], (\mathbb{C}^n)\dual)$ that are absolutely continuous on $[\alpha, \beta]$.
\smallskip

\textbf{Step 4.}
In \cite{Nishiguchi_2026}, it has been shown that the function
\begin{equation*}
	[\alpha, \beta] \ni t \mapsto L\int_\alpha^t u_s \mspace{2mu} ds \in \mathbb{C}^n
\end{equation*}
is absolutely continuous.
The same reasoning of the proof yields that the function
\begin{equation*}
	[\alpha, \beta] \ni t \mapsto L^*\int_\alpha^t v^s \mspace{2mu} ds \in (\mathbb{C}^n)\dual
\end{equation*}
is also absolutely continuous.
Since the identity~\eqref{eq:duality_bilinear_form_retarded} remains true by replacing the interval $[\alpha, \beta]$ with any subinterval $[\alpha', \beta'] \subset [\alpha, \beta]$, the aforementioned absolute continuity implies that the function
\begin{equation*}
	[\alpha, \beta] \ni t \mapsto \tilde{B}\rbra*{v^t, u_t} \in \mathbb{C}
\end{equation*}
is absolutely continuous.
Furthermore, the Lebesgue differentiation theorem (see, e.g., \cite[Theorem~7.10]{Rudin_1987}) combined with the product rule yields
\begin{align*}
	&\frac{d}{dt} \mspace{2mu} \tilde{B}\rbra*{v^t, u_t} \\
	&= \dot{v}(t)u(t) + v(t)\dot{u}(t) \\
	&\mspace{30mu} + \sbra*{\frac{d}{dt} \mspace{2mu} \rbra*{L^* \int_\alpha^t v^s \mspace{2mu} ds}}u(t) + \rbra*{L^* \int_\alpha^t v^s \mspace{2mu} ds} \dot{u}(t) - \rbra*{L^* \int_\alpha^t v^s \mspace{2mu} ds} \dot{u}(t) \\
	&\mspace{30mu} - \dot{v}(t) \rbra*{L\int_\alpha^t u_s \mspace{2mu} ds} - v(t)\sbra*{\frac{d}{dt} \mspace{2mu} \rbra*{L\int_\alpha^t u_s \mspace{2mu} ds}} + \dot{v}(t)\rbra*{L\int_\alpha^t u_s \mspace{2mu} ds} \\
	&= \sbra*{\frac{d}{dt} \mspace{2mu} \rbra*{v(t) + L^* \int_\alpha^t v^s \mspace{2mu} ds}}u(t) + v(t)\sbra*{\frac{d}{dt} \mspace{2mu} \rbra*{u(t) - L\int_\alpha^t u_s \mspace{2mu} ds}}
\end{align*}
for a.e.\ $t \in [\alpha, \beta]$.
\smallskip

These steps complete the proof.
\end{proof}

\begin{corollary}
If $u \in \mathcal{L}^p([\alpha - r, \beta], \mathbb{C}^n)$ and $v \in \mathcal{L}^q([\alpha, \beta + r], (\mathbb{C}^n)\dual)$ are defined on $[\alpha, \beta]$ and satisfies
\begin{equation*}
	u(t) = u(\alpha) + L\int_\alpha^t u_s \mspace{2mu} ds, \mspace{15mu}
	v(t) = v(\beta) - L^*\int_\beta^t v^s \mspace{2mu} ds
\end{equation*}
for $t \in [\alpha, \beta]$, then the function $[\alpha, \beta] \ni t \mapsto \tilde{B}\rbra*{v^t, u_t} \in \mathbb{C}$ is constant.
\end{corollary}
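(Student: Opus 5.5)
The plan is to reduce the statement to Theorem~\ref{thm:Lagrange_identity_mild}. First I check that its hypotheses hold. The equation $u(t) = u(\alpha) + L\int_\alpha^t u_s \mspace{2mu} ds$ exhibits $u|_{[\alpha, \beta]}$ as a constant plus the function $t \mapsto L\int_\alpha^t u_s \mspace{2mu} ds$. As recalled in Step~4 of the proof of Theorem~\ref{thm:Lagrange_identity_mild} (from \cite{Nishiguchi_2026}), that function is absolutely continuous on $[\alpha, \beta]$. Hence $u|_{[\alpha, \beta]}$ is absolutely continuous.

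For $v$, I rewrite the integral from $\beta$ as one from $\alpha$, splitting at $\alpha$:
\begin{equation*}
	\int_\beta^t v^s \mspace{2mu} ds = \int_\alpha^t v^s \mspace{2mu} ds - \int_\alpha^\beta v^s \mspace{2mu} ds \quad \text{in } C^*.
\end{equation*}
This holds pointwise in $\tau \in [0, r]$ by additivity of the Lebesgue integral. By linearity of $L^*$,
\begin{equation*}
	v(t) = \rbra*{v(\beta) + L^*\int_\alpha^\beta v^s \mspace{2mu} ds} - L^*\int_\alpha^t v^s \mspace{2mu} ds,
\end{equation*}
which is a constant minus an absolutely continuous function, again by Step~4 of that proof. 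So $v|_{[\alpha, \beta]}$ is absolutely continuous.

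Theorem~\ref{thm:Lagrange_identity_mild} therefore applies. It says that $t \mapsto \tilde{B}(v^t, u_t)$ is absolutely continuous, with derivative given a.e.\ by the stated formula. The key observation is that the two bracketed functions in that formula are constant under the hypotheses. Indeed, $u(t) - L\int_\alpha^t u_s \mspace{2mu} ds = u(\alpha)$, and by the rewriting above $v(t) + L^*\int_\alpha^t v^s \mspace{2mu} ds = v(\beta) + L^*\int_\alpha^\beta v^s \mspace{2mu} ds$. Their derivatives therefore vanish for every $t$, so the derivative of $\tilde{B}(v^t, u_t)$ is $0$ for a.e.\ $t \in [\alpha, \beta]$.

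An absolutely continuous function with a.e.\ vanishing derivative is constant, by the fundamental theorem of calculus for absolutely continuous functions (e.g.\ \cite[Theorem~7.20]{Rudin_1987}). This gives the conclusion.

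The main obstacle is the bookkeeping of the base point for $v$: its equation is normalized at $\beta$, while Theorem~\ref{thm:Lagrange_identity_mild} uses integrals from $\alpha$. The additivity of the time-reversed retarded integral handles this. One must also note that $u$ and $v$ are taken in $\mathcal{L}^p$ and $\mathcal{L}^q$ respectively, as Theorem~\ref{thm:Lagrange_identity_mild} requires (this is implicit in the statement). Everything else is a direct application of earlier results.
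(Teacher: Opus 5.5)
Your proposal is correct and follows essentially the same route as the paper: re-base the equation for $v$ at $\alpha$ (your constant $v(\beta) + L^*\int_\alpha^\beta v^s \mspace{2mu} ds$ is exactly $v(\alpha)$, which is how the paper phrases it), then apply Theorem~\ref{thm:Lagrange_identity_mild} to get an absolutely continuous function with a.e.\ vanishing derivative. Your explicit check that $u|_{[\alpha,\beta]}$ and $v|_{[\alpha,\beta]}$ are absolutely continuous, via the absolute continuity of $t \mapsto L\int_\alpha^t u_s \mspace{2mu} ds$ and its adjoint counterpart, supplies a hypothesis of the theorem that the paper's proof leaves implicit.
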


\begin{proof}
Since $v(\alpha) = v(\beta) - L^*\int_\beta^\alpha v^s \mspace{2mu} ds$ holds by the assumption, we have
\begin{align*}
	v(t)
	&= v(\beta) - L^*\int_\beta^t v^s \mspace{2mu} ds \\
	&= v(\alpha) + L^*\int_\beta^\alpha v^s \mspace{2mu} ds - L^*\int_\beta^t v^s \mspace{2mu} ds \\
	&= v(\alpha) - L^*\int_\alpha^t v^s \mspace{2mu} ds.
\end{align*}
Therefore, applying Theorem~\ref{thm:Lagrange_identity_mild} yields
\begin{equation*}
	\frac{d}{dt} \mspace{2mu} \tilde{B}\rbra*{v^t, u_t} = 0
\end{equation*}
for a.e.\ $t \in [\alpha, \beta]$.
This leads to the conclusion.
\end{proof}

Theorem~\ref{thm:Lagrange_identity_mild} leads us to the following Green identity in the mild sense.

\begin{corollary}\label{cor:Green_identity_mild}
Let $u \in \mathcal{L}^p([\alpha - r, \beta], \mathbb{C}^n)$ and $v \in \mathcal{L}^q([\alpha, \beta + r], (\mathbb{C}^n)\dual)$ be given.
If the restrictions $u|_{[\alpha, \beta]}$ and $v|_{[\alpha, \beta]}$ are absolutely continuous, then
\begin{align*}
	&\int_\alpha^\beta \cbra*{\sbra*{\frac{d}{dt} \mspace{2mu} \rbra*{v(t) + L^* \int_\alpha^t v^s \mspace{2mu} ds}} u(t) + v(t) \sbra*{\frac{d}{dt} \mspace{2mu} \rbra*{u(t) - L \int_\alpha^t u_s \mspace{2mu} ds}}} \mspace{2mu} dt \\
	&= \sbra*{\tilde{B}\rbra*{v^t, u_t}}_{t = \alpha}^\beta
\end{align*}
holds.
\end{corollary}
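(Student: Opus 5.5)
The plan is to obtain the identity by integrating the Lagrange identity of Theorem~\ref{thm:Lagrange_identity_mild} over $[\alpha, \beta]$. Under the stated hypotheses, that theorem shows that $[\alpha, \beta] \ni t \mapsto \tilde{B}(v^t, u_t)$ is absolutely continuous. It also gives the a.e.\ formula
\begin{equation*}
	\frac{d}{dt} \mspace{2mu} \tilde{B}\rbra*{v^t, u_t}
	= \sbra*{\frac{d}{dt} \mspace{2mu} \rbra*{v(t) + L^* \int_\alpha^t v^s \mspace{2mu} ds}} u(t) + v(t) \sbra*{\frac{d}{dt} \mspace{2mu} \rbra*{u(t) - L \int_\alpha^t u_s \mspace{2mu} ds}}.
\end{equation*}
The fundamental theorem of calculus for absolutely continuous functions (e.g.\ \cite[Theorem~7.20]{Rudin_1987}) then says that the derivative is Lebesgue integrable on $[\alpha, \beta]$. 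Its integral equals $\tilde{B}(v^\beta, u_\beta) - \tilde{B}(v^\alpha, u_\alpha)$, which is exactly the claimed Green identity.

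The only point to check is that the integrand on the left-hand side is meaningful as a Lebesgue integral, and that it agrees a.e.\ with the derivative above. This is automatic once we note the following facts. The maps $t \mapsto v(t) + L^*\int_\alpha^t v^s \mspace{2mu} ds$ and $t \mapsto u(t) - L\int_\alpha^t u_s \mspace{2mu} ds$ are absolutely continuous on $[\alpha, \beta]$, by the hypothesis on $u, v$ together with the absolute continuity from \cite{Nishiguchi_2026} used in Step~4 of the preceding proof. Hence their derivatives exist a.e.\ and are in $L^1$. Multiplying these derivatives by the bounded continuous functions $u|_{[\alpha,\beta]}$ and $v|_{[\alpha,\beta]}$ keeps them integrable.

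There is no real obstacle: the substantive work (the density argument and the absolute continuity) is already carried out in Theorem~\ref{thm:Lagrange_identity_mild}. Alternatively, one could read the identity directly from \eqref{eq:duality_bilinear_form_retarded} by an integration by parts for absolutely continuous functions. I would nonetheless present the short argument via the Lagrange identity.
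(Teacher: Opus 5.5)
Your proposal is correct and matches the paper's proof: the paper also obtains the mild Green identity by integrating the Lagrange identity of Theorem~\ref{thm:Lagrange_identity_mild} over $[\alpha, \beta]$, using the fundamental theorem of calculus for absolutely continuous functions (Rudin, Theorem~7.20). Your extra check that the integrand is Lebesgue integrable (derivatives of absolutely continuous functions multiplied by the bounded continuous $u$, $v$ on $[\alpha, \beta]$) is correct, and the paper leaves it implicit.
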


\begin{proof}
The conclusion is obtained from Theorem~\ref{thm:Lagrange_identity_mild} by using the fundamental theorem of calculus (see, e.g., \cite[Theorem~7.20]{Rudin_1987}).
\end{proof}

\section{Conclusion}

In this paper, we have established the Green identity and the Lagrange identity for a given autonomous linear RFDE, thereby deriving the associated bilinear form $B$ (the Hale bilinear form) and the adjoint equation.
Consequently, the structural origin of the Hale bilinear form, which had remained elusive since its \textit{ad hoc} introduction by Hale, has been clarified as a mathematical necessity within the framework of theory of differential equations.
Furthermore, through the dense extension of this bilinear form to the $L^p$-framework utilizing the memory measure $m$, we have extended the Green and Lagrange identities to the mild sense.

The adjoint theory established in this study is expected to provide a more explicit and computable alternative foundation for the normal form theory and invariant manifold theory developed by Faria and Magalh\~aes~\cite{Faria_Magalhaes_1995_BT, Faria_Magalhaes_1995_Hopf}, in contrast to the abstract sun-star calculus framework established by Diekmann and his collaborators (see \cite{Diekmann--vanGils--Lunel--Walther_1995}).

Moreover, our framework opens up new possibilities for extending the theory to non-autonomous systems, neutral functional differential equations, and differential equations with delay and spatial structures.
In particular, since linearizing a nonlinear RFDE along a limit cycle or a homoclinic orbit yields a non-autonomous linear RFDE, the extension of our approach to non-autonomous systems is of paramount importance.
It is anticipated to provide a rigorous and explicit real-analytic bedrock not only for geometric investigations of global bifurcations such as exponential dichotomies by Lin~\cite{Lin_1986}, but also for recent developments in the phase reduction theory for delayed systems spanning from delay differential equations studied by Novi\v{c}enko and Pyragas~\cite{Novicenko--Pyragas_2012} as well as Kotani et al.~\cite{Kotani--Yamaguchi--Ogawa--Jimbo--Nakao--Ermentrout_2012} to reaction--diffusion systems with delay investigated by Ozawa and Kawamura~\cite{Ozawa--Kawamura_2026}.

\section*{Acknowledgements}

This work was supported by JSPS Grant-in-Aid for Young Scientists Grant Number JP23K12994.

\end{document}